\documentclass[11pt]{amsart}
\usepackage[margin=1.0in]{geometry}
\usepackage{amsmath,amssymb,amsfonts,graphicx,color,fancyhdr,psfrag,comment,enumerate}
\usepackage[latin1]{inputenc}
\usepackage[hyperpageref]{backref}
\usepackage[colorlinks=true, pdfstartview=FitV, linkcolor=blue, citecolor=blue, urlcolor=blue]{hyperref}
\usepackage[capitalise,noabbrev]{cleveref}
\usepackage{tcolorbox,longfbox}
\allowdisplaybreaks
\usepackage{epstopdf}
\usepackage{mathrsfs}
\usepackage{epsfig}
\usepackage{hyperref}

\usepackage{ifthen}
\usepackage{float}
\usepackage{enumerate}
\usepackage{mathtools}
\usepackage[bottom]{footmisc}
\usepackage{tikz}
\usepackage{comment}
\usetikzlibrary{matrix,shapes,arrows,positioning,chains}
\usepackage{amssymb,amsmath,amsfonts}
\usepackage{bbm}

\usepackage{cleveref}
\allowdisplaybreaks

\DeclareMathOperator\supp{supp}
\DeclareMathOperator\sloc{sloc}

\newtheorem{lemma}{Lemma}[section]

\numberwithin{equation}{section}
\newtheorem{theorem}{Theorem}[section]
\newtheorem{claim}{Claim}[]
\newtheorem{proposition}[theorem]{Proposition}

\newtheorem{corollary}[theorem]{Corollary}

\newcommand{\R}{\mathbb{R}}
\newcommand{\C}{\mathbb{C}}
\newcommand{\Z}{\mathbb{Z}}

\begin{document}
\title[Sub-Laplacians with drift on M\'etivier groups]{On Spectral multiplier theorem for sub-Laplacians with drift on M\'etivier groups}

\author[N. Garg, J. Singh]
{Nishta Garg \and Joydwip Singh} 

\address[N. Garg]{Department of Mathematics, Indian Institute of Science Education and Research Bhopal, Bhopal--462066, Madhya Pradesh, India.}
\email{nishta21@iiserb.ac.in}

\address[J. Singh]{Department of Mathematical Sciences, Indian Institute of Science Education and Research Mohali, Mohali--140306, Punjab, India.}
\email{joydwipsingh@iisermohali.ac.in}

\subjclass[2020]{42B15, 43A85, 43A22}

\keywords{Sub-Laplacians, Drift, M\'etivier groups, Heisenberg-type groups, Spectral multipliers}

\begin{abstract}
In this paper, we prove a spectral multiplier theorem for sub-Laplacians with drift on M\'etivier groups. We improve the result of [Martini, Ottazzi and Vallarino, Rev. Mat. Iberoam, 2019] in case of M\'etivier groups, by reducing the required smoothness condition on the multiplier function from homogeneous dimension to the topological dimension of the underlying group.
\end{abstract}

\maketitle

\section{Introduction}
Let $G$ be a connected, simply connected, two-step stratified Lie group with Lie algebra $\mathfrak{g}$, that is, the algebra $\mathfrak{g}$ can be decomposed into two non-trivial subspaces as $\mathfrak{g}_1 \oplus \mathfrak{g}_2$ such that $[\mathfrak{g}_1, \mathfrak{g}_1] = \mathfrak{g}_2$ and $[\mathfrak{g}_1, \mathfrak{g}_2] = \{0\}$. Let $\dim \mathfrak{g}_1 = d_1$ and $\dim \mathfrak{g}_2 = d_2$. One often calls $\mathfrak{g}_1$ and $\mathfrak{g}_2$ as the first and second layer of $\mathfrak{g}$, respectively. Let $\{X_1, \ldots, X_{d_1}, T_1, \ldots, T_{d_2}\}$ be a basis of $\mathfrak{g}$ and $\langle \cdot, \cdot \rangle$ denote an inner product on $\mathfrak{g}$ with respect to which the basis becomes an orthonormal basis. The inner product $\langle \cdot, \cdot \rangle$ on $\mathfrak{g}$ induces a norm $|\cdot|$ on $\mathfrak{g}_2^*$, the dual of $\mathfrak{g}_2$. For each $\lambda \in \mathfrak{g}_2^*$, there exists a skew symmetric bilinear form $\omega_{\lambda} : \mathfrak{g}_1 \times \mathfrak{g}_1 \to \mathbb{R}$ such that
\begin{align*}
    \omega_{\lambda}(x, x') := \lambda([x, x']) .
\end{align*}
Corresponding to $\omega_{\lambda}$, there is a skew symmetric endomorphism $J_{\lambda}$ on $\mathfrak{g}_1$ which satisfies
\begin{align*}
    \omega_{\lambda}(x, x') = \langle J_{\lambda} x, x' \rangle .
\end{align*}
The group $G$ is said to be a M\'etivier group if $J_{\lambda}$ is invertible for all $\lambda \neq 0$ and Heisenberg-type group if $J_{\lambda}^2 = -|\lambda|^2 \, \text{id}_{\mathfrak{g}_1}$. Note that the class of M\'etivier groups is strictly larger than the class of Heisenberg-type groups, see \cite{Muller_Seeger_Singular_Spherical_maximal_operator_Nilpotent_2004}.

Denote $Q:=d_1+2d_2$ and $d:=d_1+d_2$ to be the homogeneous and topological dimension of $G$ respectively. Note that the basis $\{X_1, \ldots, X_{d_1}, T_1, \ldots, T_{d_2}\}$ of $\mathfrak{g}$, can be identified with the left-invariant vector fields via the Lie derivative. Corresponding to the basis of the first layer vector fields $\{X_1, \ldots, X_{d_1}\}$, the sub-Laplacian $\mathcal{L}$ on M\'etivier groups is defined as a second-order left-invariant differential operator, given by
\begin{align*}
    \mathcal{L} = -(X_1^2 + \cdots + X_{d_1}^2) .
\end{align*}
The sub-Laplacian $\mathcal{L}$ turns out to be positive and essentially self-adjoint on $L^2(G)$, where $G$ is endowed with the Haar measure. Since the group $G$ is nilpotent Lie group, the exponential map $\exp: \mathfrak{g} \to G$ is a global diffeomorphism, therefore using this exponential map one may identify $G$ with its Lie algebra $\mathfrak{g} = \mathfrak{g}_1 \oplus \mathfrak{g}_2$, and via the chosen basis $\{X_1, \ldots, X_{d_1}, T_1, \ldots, T_{d_2}\}$ of $\mathfrak{g}$, one may also identify $\mathfrak{g}$ with $\mathbb{R}_x^{d_1} \times \mathbb{R}_t^{d_2}$, that is, $G \cong \mathfrak{g} \cong \mathbb{R}_x^{d_1} \times \mathbb{R}_t^{d_2}$. Also the Haar measure of $G$ is given by the Lebesgue measure $dx \, dt$ of $\mathbb{R}^{d_1} \times \mathbb{R}^{d_2}$.

Now, for a left-invariant vector field $X \in \mathfrak{g}$, consider 
\begin{align}
    \label{eq_sub-Laplacian_with_drift}
\mathcal{L}_{X} := \mathcal{L} - X = - \sum_{j=1}^{d_1} X_{j}^{2} - X.
\end{align}
In \cite{Hebisch_Mauceri_Meda_spectral_multipliers_drift_2015}, it was proved that $\mathcal{L}_{X}$ is symmetric in $L^2(d\mu)$ for some positive measure $\mu$ on $G$ if and only if there is a positive character $\chi$ on the group $G$ such that $d\mu = \chi \, dx \, dt$ and $X = \sum_{j=1}^{d_1} d\chi_{0}(X_j) X_{j}$ where $0:=(0,0)$ is the identity element of $G$. From now on, we shall write $d\mu_{X} := \chi \, dx \, dt$. It turns out that the sub-Laplacian with drift $\mathcal{L}_{X}$ associated with the drift $X$ is essentially self-adjoint on $L^{2}(d\mu_{X})$ and the spectrum is contained in $[b_X^2, \infty)$, where $b_{X} = |X|/2$ with $|X| = \left(\sum_{j=1}^{d_1}|d\chi_{0}(X_{j})|^2\right)^{1/2}$. In this paper, we shall always assume $X \neq 0$.

The sub-Laplacian with drift is a very well studied topic in the literature, for example regarding the heat kernel estimates of $\mathcal{L}_X$, see \cite{Alexopoulos_Heat_kernel_Drift_2002, Dungey_Heat_Kernel_Drift_2005}, $L^p$-boundedness of Riesz transform associated to $\mathcal{L}_X$, see \cite{Lohoue_Mustapha_Riesz_Transform_2004, Li_Sjogren_Sub_Laplacian_Drift_2021}, $L^p$-boundedness of spectral multipliers \cite{Hebisch_Mauceri_Meda_spectral_multipliers_drift_2015, Martini_Ottazzi_Vallarino_spectral_multipliers_drift_2019} and references therein. In this paper, we study the boundedness of spectral multipliers associated with the sub-Laplacian with drift $\mathcal{L}_{X}$ on M\'etivier groups.

For any bounded Borel function $F$ defined on $[b_{X}^{2}, \infty)$, from the spectral theorem of $\mathcal{L}_{X}$ we have
$$
F(\mathcal{L}_{X}) = \int_{b_{X}^2}^{\infty} F(\lambda) \, dE_{\lambda},
$$
where $E_{\lambda}$ denotes the spectral measure of $\mathcal{L}_{X}$. It is well known from the spectral theory that the operator $F(\mathcal{L}_{X})$ is bounded on $L^{2}(d\mu_{X})$. Note that, the bounded function $F$ is called multiplier and the corresponding operator $F(\mathcal{L}_{X})$ as spectral multiplier. But for $p\neq 2$, it is known that, only boundedness of the multiplier $F$ is not sufficient to get the $L^p$-boundedness of $F(\mathcal{L}_{X})$. Therefore, it is natural to ask under what additional condition on the multiplier $F$, the spectral multiplier $F(\mathcal{L}_{X})$ initially defined on $L^{2}(d\mu_{X}) \cap L^{p}(d\mu_{X})$ extends to a bounded operator on $L^{p}(d\mu_{X})$ for some $1 \leq p \leq \infty$ and $p \neq 2$. 

In this article, we are interested in studying the $L^p$-boundedness of the spectral multiplier $F(\mathcal{L}_{X})$. For amenable groups $G$, Hebisch, Mauceri and Meda \cite{Hebisch_Mauceri_Meda_spectral_multipliers_drift_2015} proved that for $p \in [1,\infty]\setminus \{2\}$ every $L^{p}(d\mu_{X})$ spectral multiplier $F$ of $\mathcal{L}_{X}$ extends to a bounded holomorphic function on a parabolic region $P_{X,p}$ in the complex plane, given by
\begin{align*}
    P_{X,p} &:= \Big\{ x+iy \in \mathbb{C} : x>\frac{y^2}{4 b_X^2 \sin^2 \phi_p^*} + b_X^2 \cos^2\phi_p^* \Big\} ,
\end{align*}
where $\phi_p^* = \arcsin|2/p-1|$. Also, in the case when $G$ has polynomial growth, they proved a sufficient condition for the holomorphic function $F$ defined on the same parabolic region $P_{X,p}$, to be a spectral multiplier on $L^p(d\mu_{X})$ for $1 < p < \infty$. 

Recently, Martini, Ottazzi and Vallarino \cite{Martini_Ottazzi_Vallarino_spectral_multipliers_drift_2019} further improved the work of Hebisch, Mauceri and Meda \cite{Hebisch_Mauceri_Meda_spectral_multipliers_drift_2015} by means of giving the sufficient condition in terms of the $L^q$-norm instead of the pointwise condition on the spectral multiplier $F$ and also reducing the number of derivatives as well as making it dependent on $p$. In order to state their result, first we need some notations.

Let $\psi$ be a bump function supported in $[1/4,4]$ such that it satisfies
\begin{align}
\label{Definition of psi}
    \sum_{j \in \mathbb{Z}} \psi(2^j t) = 1 \quad \text{for all} \quad t>0 .
\end{align}

For $s \in \mathbb{R}$ and $1\leq p \leq \infty$, let $L_s^p$ denote the $L^p$-Sobolev space of fractional order $s$ on $\mathbb{R}$. Then for $s \geq 0$ and $F: \mathbb{R} \to \mathbb{C}$, we define the scale-invariant local Sobolev norm by
\begin{align*}
    \|F\|_{L^2_{s, \sloc}} &:= \sup_{t>0} \|F(t\cdot) \psi\|_{L^2_s} .
\end{align*}

We also define the weighted Sobolev spaces on $\mathbb{R}$, denoted by $L_{s,r}^{p, \kappa}$, to consists of all functions $F$ such that the following norm is finite: for all $s, \kappa \in \mathbb{R}$, $1<p<\infty$ and $1\leq r \leq \infty$ and $F: \mathbb{R} \to \mathbb{C}$,
\begin{align*}
    \|F\|_{L_{s,r}^{p, \kappa}} &= \left\{\begin{array}{ll}
       \left[\sum_{j \in \mathbb{N}} \left[2^{j(\kappa+1/p)}\|F(2^j \cdot)\psi_{(j)}\|_{L_s^p} \right]^r \right]^{1/r}  & \quad \text{if} \quad r <\infty \\
       \sup_{j \in \mathbb{N}} 2^{j(\kappa+1/p)}\|F(2^j \cdot)\psi_{(j)}\|_{L_s^p} & \quad \text{if} \quad r=\infty ,
    \end{array}\right.
\end{align*}
where $\psi_{(j)} = \sum_{\epsilon=\pm 1} \psi(\epsilon \cdot)$ if $j>0$ and $\psi_{(0)} = \sum_{\epsilon=\pm 1, k \in \mathbb{N}} \psi(\epsilon 2^k \cdot)$.

\medskip
For $\Theta \in (0, \infty)$, we define a complex strip $S_{\Theta}$ denoted by $S_{\Theta} := \{z \in \mathbb{C} : |\text{Im}\, z| < \Theta\}$. Now for all $p \in (1, \infty)$, $r \in [1, \infty]$, $s\in (1/p, \infty)$ and $\kappa \in \mathbb{R}$, we denote
\begin{align*}
    \mathscr{L}_{s,r}^{p, \kappa, \Theta} := \{F(\cdot \pm i \Theta) \in L_{s,r}^{p, \kappa} : F: \overline{S_{\Theta}} \to \mathbb{C}\  & \text{even, continuous, holomorphic on}\ S_{\Theta} \\
    &\text{and have at most polynomial growth}\}.
\end{align*}

For $\Theta =0$, we denote $\mathscr{L}_{s,r}^{p, \kappa, 0}$ to be the space of even functions which belongs to $L_{s,r}^{p, \kappa}$. Then for any $\Theta \in [0, \infty)$, the spaces $\mathscr{L}_{s,r}^{p, \kappa, \Theta}$ become a Banach space with the following norm given by
\begin{align*}
    \|F\|_{\mathscr{L}_{s,r}^{p, \kappa, \Theta}} &:= \|F(\cdot \pm i \Theta)\|_{L_{s,r}^{p, \kappa}} .
\end{align*}

For any $N \in \mathbb{N}$, let $H^{\infty}(S_{\Theta}; N)$ denote the space of bounded even holomorphic functions $F$ on $S_{\Theta}$ and continuous on $\overline{S_{\Theta}}$ with all their derivatives up to the order $N$, such that
\begin{align*}
    \|F\|_{H^{\infty}(S_{\Theta}; N)} &:= \max_{j\in \{0, \ldots, N\}} \sup_{z \in \overline{S_{\Theta}}} (1+|z|)^j |F^{(j)}(z)| < \infty .
\end{align*}

Note that by a change of variable $z \mapsto b_X^2 + z^2$, the strip $S_{\Theta}$ becomes the parabolic region $P_{X,p}$ defined earlier, where $\Theta = |2/p - 1|b_{X}$. Instead of giving a condition on the holomorphic function defined on the parabolic region, it is sometimes convenient to express the condition on the holomorphic function defined on this strip $S_{\Theta}$. 

Now we are in a position to state the result of \cite{Martini_Ottazzi_Vallarino_spectral_multipliers_drift_2019}. Also, we use the local Hardy $\mathfrak{h}^1(d\mu_X)$ and $\mathfrak{bmo}(d\mu_X)$ spaces to state this result, and for their definition one can refer to \cite{Goldberg_hardy_space_79, Meda_Volpi_Goldberg_type_17, Martini_Ottazzi_Vallarino_spectral_multipliers_drift_2019}. In 2019, Martini, Ottazzi, and Vallarino \cite{Martini_Ottazzi_Vallarino_spectral_multipliers_drift_2019} proved the following multiplier theorem for the sub-Laplacian with drift $\mathcal{L}_{X}$ on non-compact Lie groups. Here we only state their result for two-step stratified Lie groups, but they have proved the result in much more generality. Let us set $\mathfrak{D}_X := \sqrt{\mathcal{L}_X-b_X^2}$.

\begin{theorem}\cite[Theorem 3.1, Lemma 4.1]{Martini_Ottazzi_Vallarino_spectral_multipliers_drift_2019}
\label{Theorem: Martini Ottazzi Vallarino theorem}
    Let $G$ be a two-step stratified Lie group. Let $p \in [1,\infty] \setminus \{2\}$, $q \in [2, \infty)$ such that $1/q=|2/p-1|/2$. Set $\Theta_{X, p} = |2/p - 1| b_{X}$. Suppose
\begin{align*}
    F \in \mathscr{L}_{s,\infty}^{q, -1/q, \Theta_{X, p}} \quad \text{for some} \quad s > |1/p - 1/2|(Q+1) ,
\end{align*}
then
\begin{enumerate}
    \item If $p \in (1,\infty)$, then $F(\mathfrak{D}_X)$ extends to a bounded operator on $L^p(d\mu_X)$.
    \medskip
    \item If $p =1$, then $F(\mathfrak{D}_X)$ extends to a bounded operator on $\mathfrak{h}^1(d\mu_X)$ to $L^1(d\mu_X)$.
    \medskip
    \item If $p =\infty$, then $F(\mathfrak{D}_X)$ extends to a bounded operator on $L^{\infty}(d\mu_X)$ to $\mathfrak{bmo}(d\mu_X)$.
\end{enumerate}

\end{theorem}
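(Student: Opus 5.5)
This is a cited result, so the plan follows the strategy of Martini--Ottazzi--Vallarino rather than anything new. The first step is to remove the drift and the character. The map $f \mapsto \chi^{1/2} f$ is an isometry from $L^2(d\mu_X)$ onto $L^2(dx\,dt)$, and it conjugates $\mathcal{L}_X$ to $\mathcal{L}+b_X^2$. Hence $\mathfrak{D}_X$ is unitarily equivalent to $\sqrt{\mathcal{L}}$. It follows that $F(\mathfrak{D}_X)f = \chi^{-1/2}\,(F(\sqrt{\mathcal{L}})(\chi^{1/2}f))$, and the integral kernel of $F(\mathfrak{D}_X)$ with respect to $\mu_X$ is
\[
\chi^{-1/2}(x)\,K_{F(\sqrt{\mathcal L})}(y^{-1}x)\,\chi^{-1/2}(y).
\]
Boundedness on $L^p(d\mu_X)$ therefore reduces to boundedness on $L^p(G)$ of the convolution operator with kernel $K_{F(\sqrt{\mathcal L})}(z)\,\chi^{1/p-1/2}(z)$. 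Since $\chi$ grows exponentially, like $e^{|X||z|}$ along the drift direction, this forces exponential decay of the kernel. That decay comes from holomorphy of $F$ in the strip $S_{\Theta_{X,p}}$, because $(1/p-1/2)|X| = \Theta_{X,p}$ up to sign.

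The second step splits $F$ smoothly into a low-frequency part $F\psi_{(0)}$ and the dyadic pieces $F\psi(2^{-j}\cdot)$, $j\ge1$. Each piece is treated by the two-region argument. Near the identity, in a ball of radius $1$, the weight $\chi$ is bounded, and the local part of the kernel is handled by the standard Mihlin--H\"ormander machinery. The key tool is the $L^2$ weighted Plancherel estimate
\[
\|\,|z|^{\alpha}K_{H(\sqrt{\mathcal L})}\|_{L^2} \lesssim \|H\|_{L^2_\alpha},
\]
which comes from finite propagation speed. Applying it at scale $2^j$ and interpolating with the trivial $L^1$--$L^\infty$ bound gives the local estimate. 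This interpolation is where the threshold $s>|1/p-1/2|(Q+1)$ appears: $Q$ is the volume growth in small balls, and the extra $+1$ comes from passing from the one-dimensional $L^q$ Sobolev norm in the spectral variable to kernel bounds.

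The third step, which I expect to be the main obstacle, is the part of the kernel away from the identity. There I would use the functional calculus via the wave equation, writing
\[
F(\sqrt{\mathcal L}) = \frac{1}{2\pi}\int \hat F(\tau)\cos(\tau\sqrt{\mathcal L})\,d\tau.
\]
Finite propagation speed localizes the contribution at distance $R$ to $|\tau|\ge R$. Holomorphy of $F$ on $S_\Theta$ then lets me shift the contour, which gives decay $e^{-\Theta|\tau|}$ of $\hat F$ measured through $F(\cdot\pm i\Theta)$. This decay exactly cancels $\chi^{1/p-1/2}\lesssim e^{\Theta R}$ along the worst direction. Only polynomially many factors are left over, namely the volume growth $R^{Q}$ and powers of $R$. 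These are controlled by the $L^{q,-1/q}$ weighted Sobolev norm with $1/q=|1/p-1/2|$, using Riesz--Thorin interpolation between an $L^2$ estimate, which is Plancherel, and an $L^1$ estimate on annuli $\{|z|\sim 2^k\}$.

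Finally I would sum over $j$ and over dyadic annuli $k$ using the $\sup_j$ ($r=\infty$) structure together with the margin in $s$. For $p=1$ and $p=\infty$, the same kernel estimates give the $\mathfrak h^1\to L^1$ bound by testing on local atoms supported in balls of radius at most $1$, and the $L^\infty\to\mathfrak{bmo}$ bound follows by duality. For $1<p<\infty$ the result then follows by interpolation with the $L^2$ case, which is trivial since $F$ is bounded on the real line.
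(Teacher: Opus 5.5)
The ingredients you list (conjugation by $\chi^{1/2}$, a near/far split via finite propagation speed, holomorphy absorbing the growth of $\chi$, then interpolation) are those of the Martini--Ottazzi--Vallarino argument that the paper reproduces in Section~3. However, the way you organise them breaks the far-region step. You first cut $F$ into spectral pieces $F\psi_{(0)}$, $F\psi(2^{-j}\cdot)$ and then treat \emph{each piece} by the two-region argument. These pieces are compactly supported in the spectral variable, so they are not holomorphic in $S_{\Theta}$. Their Fourier transforms therefore cannot decay like $e^{-\Theta|\tau|}$, since that would give a nonzero compactly supported function a holomorphic extension to a strip. So the contour shift is unavailable piece by piece, and nothing compensates $\chi^{1/p-1/2}$, which reaches size $e^{\Theta_{X,p}R}$ at distance $R$.

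The paper cuts on the Fourier side instead: $F=F_l+F_g$, where $\widehat{F_l}=(\phi+\phi_2)\widehat{F}$ is supported in $[-2,2]$, and $F_g=\sum_{k\geq 3}T_k$ with $\widehat{T_k}=\phi_k\widehat{F}$. By finite propagation speed the kernel of $F_l(\sqrt{\mathcal{L}})$ is supported in $B(0,2)$, where $\chi\sim 1$. Only there is a spectral dyadic decomposition used (Claims~1 and~2 together with the $\mathfrak{h}^1$ criterion), and no holomorphy is needed. For the $T_k$ the contour is shifted for $F$ itself. Lemma~2.12 of Martini--Ottazzi--Vallarino then bounds $e^{b_X k}\|(1+|\cdot|)^{(Q-1)/2}T_k\|_{L^2}$ by $k^{-s}$ times a norm of the boundary values $F(\cdot\pm i b_X)$. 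The dyadic $\sup_j$ structure enters only through that norm.

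Your account of where the threshold comes from is also wrong, and the sketch never does the computation that produces it. The local H\"ormander part needs only $s>Q/2$ at $p=1$ (the paper's Claims~1 and~2 with $Q$ in place of $d$), which after interpolation would give $Q|1/p-1/2|$. The extra $1$ comes from the global part. Since $\supp \mathcal{K}_{T_k(\sqrt{\mathcal{L}})}\subseteq B(0,k)$, Cauchy--Schwarz, the bound $\mu_X(B(0,k))\lesssim k^{Q-1}e^{|X|k}$ and Plancherel give $\|\chi^{1/2}\mathcal{K}_{T_k(\sqrt{\mathcal{L}})}\|_{L^1}\lesssim e^{b_Xk}k^{(Q-1)/2}\|(1+|\cdot|)^{(Q-1)/2}T_k\|_{L^2}\lesssim k^{(Q-1)/2-s}\|F\|_{\mathscr{L}^{2,(Q-1)/2-s,b_X}_{s,2}}$. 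Summing over $k$ then forces $s>(Q+1)/2$. The polynomial factor here is $k^{Q-1}$, coming from $\mu_X$, not the Lebesgue count $R^{Q}$ you quote; with $R^{Q}$ the same bookkeeping only yields $s>(Q+2)/2$.

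Finally, for $1<p<2$ you cannot interpolate the single operator $F(\mathfrak{D}_X)$ between $p=1$ and $p=2$. An $F\in\mathscr{L}^{q,-1/q,\Theta_{X,p}}_{s,\infty}$ need not satisfy the $p=1$ hypothesis: the strip is narrower, $q>2$, and $s$ is smaller. For the same reason, Riesz--Thorin between Plancherel and an $L^1$ bound is not available, because the weight $\chi^{1/p-1/2}$ itself depends on $p$. What is needed is complex interpolation of the multiplier classes themselves, with strip width, $q$ and $s$ moving together (Proposition~3.8 of Martini--Ottazzi--Vallarino), followed by duality for $p>2$.
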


Note that, without drift, the $L^p$-boundedness of the spectral multiplier associated to the sub-Laplacian $\mathcal{L}$ on two-step stratified Lie group $G$ has been studied by Christ \cite{Christ_Spectral_multiplier_Nilpotent_groups_1991} and independently by Mauceri and Meda \cite{Mauceri_Meda_Multipliers_Stratified_groups_1990}. There the smoothness condition on the multiplier was expressed in terms of the homogeneous dimension $Q$ of the group $G$. However, for the case of Heisenberg-type groups, M\"uller and Stein \cite{Spectral_multipliers_Stein_Muller}, and independently Hebisch \cite{Hebisch_Spectral_Multiplier_Heisenberg_1993}, proved that the smoothness condition on the multiplier function can be further pushed down from the homogeneous dimension $Q$ to the topological dimension $d$ of Heisenberg-type groups, and these results also turn out to be optimal. After that, a lot of study has been conducted in various setups to obtain the optimal threshold for the $L^p$-boundedness of the spectral multiplier theorem, see \cite{Cowling_Klima_Sikora_Kohn_Laplacian_On_Sphere_2011}, \cite{Martini_Muller_Multiplier_Grushin_2014}, \cite{Martini_Lie_groups_Polynomial_Growth_2012}, \cite{Martini_Spectral_Multiplier_Heisenberg_Reiter_2015}, \cite{Casarino_Ciatti_Martini_Grushin_Sphere_2019} and references therein.

\emph{A natural question, therefore, is whether in Theorem \ref{Theorem: Martini Ottazzi Vallarino theorem}, the smoothness condition on $F$ can be pushed down to $s > d|1/p- 1/2|$, where $d$ is the topological dimension of the group $G$.} At this moment, we do not know whether such an improvement holds or not. In this paper, however, working on the setting of M\'etivier groups, we are able to reduce the smoothness requirements to $s > (d+1)|1/p- 1/2|$. More precisely, we prove the following result.

\begin{theorem}
\label{Theorem: Metivier drift main theorem}
    Let $G$ be a M\'etivier group. Let $p \in [1,\infty] \setminus \{2\}$, $q \in [2, \infty)$ such that $1/q=|2/p-1|/2$. Set $\Theta_{X, p} = |2/p - 1| b_{X}$. Suppose
\begin{align*}
    F \in \mathscr{L}_{s,\infty}^{q, -1/q, \Theta_{X, p}} \quad \text{for some} \quad s > |1/p - 1/2|(d+1) ,
\end{align*}
then
\begin{enumerate}
    \item If $p \in (1,\infty)$, then $F(\mathfrak{D}_X)$ extends to a bounded operator on $L^p(d\mu_X)$.
    \medskip
    \item If $p =1$, then $F(\mathfrak{D}_X)$ extends to a bounded operator on $\mathfrak{h}^1(d\mu_X)$ to $L^1(d\mu_X)$.
    \medskip
    \item If $p =\infty$, then $F(\mathfrak{D}_X)$ extends to a bounded operator on $L^{\infty}(d\mu_X)$ to $\mathfrak{bmo}(d\mu_X)$.
\end{enumerate}

\end{theorem}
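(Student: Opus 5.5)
We will follow the architecture of the proof of Theorem \ref{Theorem: Martini Ottazzi Vallarino theorem} in \cite{Martini_Ottazzi_Vallarino_spectral_multipliers_drift_2019}. The homogeneous dimension $Q$ enters that argument at only one place, namely the weighted Plancherel-type estimate for the sub-Laplacian without drift. Our plan is to replace that one ingredient by its M\'etivier-group analogue, in which $Q$ is replaced by $d$.

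The first step is the standard reduction to the driftless operator. Conjugating by $\chi^{1/2}$ is an isometry from $L^2(d\mu_X)$ onto $L^2(G)$, and it transforms $\mathcal{L}_X - b_X^2$ into $\mathcal{L}$ up to the character twist. Consequently the kernel of $F(\mathfrak{D}_X)$ equals $\chi^{-1/2}$ times the convolution kernel of $F(\sqrt{\mathcal{L}})$ with respect to Lebesgue measure; this is the identity used in \cite[Section 4]{Martini_Ottazzi_Vallarino_spectral_multipliers_drift_2019}. The $L^p(d\mu_X)$ problem therefore becomes a weighted problem for $K_{F(\sqrt{\mathcal{L}})}$ with the exponential weight $\chi^{\pm(1/2-1/p)}$.

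Next we split $F$ into a part localized near the origin and dyadic pieces $F_j = F\psi(2^{-j}\cdot)$ with $j\ge 1$. The local part is handled exactly as in \cite{Martini_Ottazzi_Vallarino_spectral_multipliers_drift_2019}: it gives the $\mathfrak{h}^1$–$L^1$ and $L^\infty$–$\mathfrak{bmo}$ endpoints and is insensitive to the dimension. For the pieces at infinity, the holomorphy of $F$ on $S_{\Theta_{X,p}}$ and the Paley–Wiener / finite propagation speed argument for the wave equation give exponential decay of the kernel that compensates the weight $\chi^{|1/p-1/2|}$ away from the ball of radius comparable to $1$. Here the $\pm i\Theta$ boundary values control the growth, as in the cited paper. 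Near the identity we use interpolation between $p=2$ and $p=1$, via a Riesz–Thorin/Stein argument in $p$ with $q$ determined by $1/q=|2/p-1|/2$, which reduces matters to an $L^1$ bound on $\{|x|+|t|^{1/2}\le R\}$ for the kernel of $F_j(\sqrt{\mathcal{L}})$.

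The key improvement, and the main obstacle, is this $L^1$ estimate. Cauchy–Schwarz on a ball of radius $R$ costs $R^{Q/2}$. To save the $d_2/2$ loss, we use the weighted Plancherel estimate on M\'etivier groups:
\[
\int_G |t|^{2\alpha}\,|K_{F(\sqrt{\mathcal{L}})}(x,t)|^2\,dx\,dt \lesssim \|F\|_{L^2_{\beta}}^2 \qquad \text{for } \alpha<d_2/2,
\]
with $F$ supported in $[1/2,2]$ and $\beta$ slightly larger than $\alpha$. This is proved via the Fourier transform in $t$, the spectral decomposition of $\mathcal{L}^\lambda$ with eigenvalues built from the eigenvalues of $|J_\lambda|$, and the invertibility of $J_\lambda$ for $\lambda\ne0$. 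The invertibility makes the eigenvalue branches and their $\lambda$-derivatives uniformly controlled away from zero, so that the $\lambda$-derivatives ($\leftrightarrow t$-weights) can be traded for smoothness of $F$, as done by Hebisch and by M\"uller–Seeger. Combining this with the drift weight and the one extra derivative needed for the $L^q$-Sobolev and exponential-weight interpolation, which produces the $+1$, yields the condition $s>(d+1)|1/p-1/2|$ in place of $(Q+1)|1/p-1/2|$. Summing over $j$ uses the $\sup_j$ ($r=\infty$) norm together with the strict inequality in $s$.
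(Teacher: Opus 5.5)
There is a genuine gap: you place the improvement away from where the threshold is actually decided, and the estimate you propose has the wrong form for that place.

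In the Martini--Ottazzi--Vallarino scheme, which the paper follows, $F$ is split in the wave variable: $F=F_l+\sum_{k\ge 3}T_k$, with $\supp\widehat{F_l}\subseteq[-2,2]$ and $\widehat{T}_k=\phi_k\widehat{F}$. The near-identity part $F_l$ is handled there under $s>Q/2$. The exponent $(Q+1)/2$ at $p=1$ comes from the far part. By finite propagation speed, $\mathcal{K}_{T_k(\sqrt{\mathcal{L}})}$ is supported in $B(0,k)$. Cauchy--Schwarz against $\chi$ on $B(0,k)$, the plain Plancherel formula and $\mu_X(B(0,k))\lesssim k^{Q-1}e^{|X|k}$ then give the bound $e^{b_Xk}k^{(Q-1)/2}\|(1+|\cdot|)^{(Q-1)/2}T_k\|_{L^2}$, and summing in $k$ forces $s>(Q+1)/2$. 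So if you treat the far part ``as in the cited paper'', as you propose, you stay at $(Q+1)/2$ whatever you gain near the identity.

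If instead you meant to feed your weighted estimate into this Cauchy--Schwarz on $B(0,k)$, it does not apply. The compactly supported kernels needed there come from wave-localized multipliers $T_k$, which have no compact spectral support, whereas your lemma is for $F$ supported in $[1/2,2]$. Your spectral pieces $F_j=F\psi(2^{-j}\cdot)$ are also of no use away from the identity. They are not holomorphic and their kernels are not compactly supported; in fact, by Hebisch--Mauceri--Meda, $F_j(\mathfrak{D}_X)$ is unbounded on $L^p(d\mu_X)$ for every $p\neq 2$ unless $F_j=0$.

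The missing ingredient is Proposition~\ref{Proposition: First layer weighted Plancherel}: $\int_G|\,|x|^{\beta}\mathcal{K}_{F(\sqrt{\mathcal{L}})}|^2\lesssim\int_0^\infty|F(\eta)|^2\eta^{Q-2\beta}\,d\eta/\eta$ for $0\le\beta<d_2/2$ and \emph{every} bounded $F$. Combine it with H\"older against $\chi(x,u)|x|^{-2\beta}$ on $B(0,r)$, whose integral is $\lesssim r^{Q-2\beta-1}e^{2b_Xr}$; this gives Claim~\ref{Assumption 3}. Letting $\beta\uparrow d_2/2$, convergence of $\sum_k k^{Q/2-\beta-1/2-s}$ is exactly $s>(d+1)/2$. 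So the $+1$ comes from this summation over $k$, together with the $R^{Q-1}$ growth of $\mu_X$-balls, not from an extra derivative in an $L^q$-Sobolev interpolation.

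Two further points. First, the piece producing the $\mathfrak{h}^1\to L^1$ endpoint is $F_l$ (kernel in $B(0,2)$, carrying all high frequencies), not a low-frequency piece of $F$. It is not dimension-insensitive: it needs the H\"ormander-type bounds of Claims~\ref{Assumption 1} and~\ref{Assumption 2}. The stratified-group argument gives these only for $s>Q/2$, and $Q/2>(d+1)/2$ as soon as $d_2\ge 2$. The paper therefore re-proves them for $s>d/2$ via Proposition~\ref{prop:L^1_estimate_of_kernel with distance}.

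Second, your justification of the central weighted estimate does not work on a general M\'etivier group. Invertibility of $J_\lambda$ gives $b_n^\omega\sim 1$ on the unit sphere, but no smoothness in $\lambda$ of the eigenvalue branches or of $R_\lambda$. The latter is only Borel measurable, and multiplicities may change off a Zariski-open set, so $t$-weights cannot simply be traded for $\lambda$-derivatives. The first-layer weight $|x|^\beta$ avoids $\lambda$-derivatives entirely. The paper uses only three things: the sub-elliptic estimate $\|\,|\cdot|^{\beta}f\|\lesssim\eta^{-\beta}\|(H^{(\eta)}_{\mathbb{R}^{2m}})^{\beta/2}f\|$ on each block, orthogonality of the Laguerre functions, and $b_n^\omega\sim1$. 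This costs no derivatives of $F$, which is exactly what makes the support-free bound above possible.
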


Note that in the above theorem the smoothness condition on the multiplier function $F \in \mathscr{L}_{s,\infty}^{q, -1/q, \Theta_{X, p}}$ is described in terms of the $L^q$-Sobolev spaces for $q \in [2, \infty)$. Instead, if we want to give pointwise condition on the the multiplier $F$, then in this regard we have following result.

\begin{corollary}
\label{Corollary: Metivier drift corollary of main}
Let $G$ be a M\'etivier group. For $p \in [1,\infty] \setminus \{2\}$, set $\Theta_{X, p} = |2/p - 1| b_{X}$. Suppose
\begin{align*}
    F \in H^{\infty}(S_{\Theta_{X,p}}, N) \quad \text{for some} \quad N \in \mathbb{N} \quad \text{and} \quad N>|1/p - 1/2|(d+1) ,
\end{align*}
then the same conclusions (1), (2) and (3) of Theorem \ref{Theorem: Metivier drift main theorem} hold.
    
\end{corollary}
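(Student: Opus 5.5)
The plan is to deduce the corollary from Theorem \ref{Theorem: Metivier drift main theorem} by an embedding: for $q \in [2,\infty)$ with $1/q = |2/p-1|/2$, we show that
\[
H^{\infty}(S_{\Theta_{X,p}}; N) \hookrightarrow \mathscr{L}_{s,\infty}^{q,-1/q,\Theta_{X,p}}
\]
for every $s \le N$. Since $N > |1/p-1/2|(d+1)$ is strict, we can choose a real $s$ with $|1/p-1/2|(d+1) < s \le N$. Because $|1/p-1/2| = 1/q$, we have $s > 1/q$, which is the condition required in the definition of the space. The theorem then gives conclusions (1)--(3) directly.

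\emph{Reduction to boundary functions.} A function $F \in H^\infty(S_{\Theta}; N)$, with $\Theta=\Theta_{X,p}$, is even, continuous on $\overline{S_\Theta}$, holomorphic on $S_\Theta$ and bounded, hence of polynomial growth. It therefore suffices to bound $\|F(\cdot \pm i\Theta)\|_{L^{q,-1/q}_{s,\infty}}$. Put $G_\pm(\xi) = F(\xi \pm i\Theta)$. For $\xi\in\mathbb{R}$ and $0\le j\le N$ we have $|G_\pm^{(j)}(\xi)| \le \|F\|_{H^\infty(S_\Theta;N)} (1+|\xi|)^{-j}$, using $1+|\xi\pm i\Theta| \ge 1+|\xi|$.

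\emph{Dyadic pieces.} With weight exponent $\kappa = -1/q$ and integrability $q$, the factor $2^{j(\kappa+1/q)}$ equals $1$. So we need
\[
\sup_{j\in\mathbb{N}} \|G_\pm(2^j\cdot)\psi_{(j)}\|_{L^q_s} < \infty .
\]
\begin{itemize}
\item For $j>0$, the function $\psi_{(j)}$ is supported in $1/4\le|\xi|\le 4$. There $\frac{d^k}{d\xi^k}[G_\pm(2^j\xi)] = 2^{jk}G_\pm^{(k)}(2^j\xi)$, which is bounded by $C\,2^{jk}(1+2^j|\xi|)^{-k} \le C' \|F\|_{H^\infty(S_\Theta;N)}$ uniformly in $j$, for $k\le N$. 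By Leibniz, $\|G_\pm(2^j\cdot)\psi_{(j)}\|_{W^{N,q}}$ is bounded uniformly in $j$, since the support is compact. Using $L^q_s \supseteq W^{N,q}$ for $s\le N$ (Bessel potential versus integer Sobolev, $1<q<\infty$) gives the bound.
\item For $j=0$, the cutoff $\psi_{(0)}=\sum_{k\in\mathbb{N}}\psi(\pm 2^k\cdot)$ is essentially a smooth cutoff to $|\xi|\lesssim 1$ away from the origin. Here $G_\pm$ and its derivatives up to order $N$ are simply bounded, so the same estimate holds.
\end{itemize}

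\emph{Main obstacle.} The only point needing care is the regularity of $\psi_{(0)}$ near $0$. If $\mathbb{N}$ starts at $1$, this sum is smooth (it equals $1-\sum_{k\le 0}\psi(2^k|\cdot|)$ near the origin, suitably interpreted), and the bound above is routine. If it is not obviously smooth, I would instead use the trivial fact that $G_\pm$ is $C^N$ on all of $\mathbb{R}$ and handle the low-frequency piece directly. I also expect to double-check the endpoint case $s=N$ versus fractional $s$. This is harmless, however, since we may always take $s$ strictly between the threshold and $N$, and then interpolate or use $W^{N,q}\subset L^q_s$.
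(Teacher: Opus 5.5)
Your proposal is correct and matches the paper's argument: the paper also deduces the corollary from Theorem \ref{Theorem: Metivier drift main theorem} via the embedding $\|F\|_{\mathscr{L}_{s,\infty}^{q,-1/q,\Theta_{X,p}}} \lesssim \|F\|_{H^{\infty}(S_{\Theta_{X,p}}, N)}$, with $s$ chosen between $|1/p-1/2|(d+1)$ and $N$. The only difference is that the paper cites Lemma 2.10(vii) of Martini--Ottazzi--Vallarino for this embedding, whereas you verify it directly by the dyadic Leibniz estimate; your concern about $\psi_{(0)}$ is harmless, since it agrees almost everywhere with a smooth compactly supported function.
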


The above corollary immediately follows from the following fact (see Lemma 2.10 (vii) in \cite{Martini_Ottazzi_Vallarino_spectral_multipliers_drift_2019})
\begin{align*}
    \|F\|_{\mathscr{L}_{s,\infty}^{q, -1/q, \Theta_{X, p}}} \lesssim \|F\|_{H^{\infty}(S_{\Theta_{X,p}}, N)} ,
\end{align*}
and by choosing $s$ such that $N > s > |1/p - 1/2|(d+1)$ and then applying Theorem \ref{Theorem: Metivier drift main theorem}.

\medskip
Note that Theorem \ref{Theorem: Metivier drift main theorem} and Corollary \ref{Corollary: Metivier drift corollary of main} are stated in terms of $\mathfrak{D}_X$. Using the change of variable $z \mapsto b_X^2 + z^2$, actually we can obtain similar results for the operator $\mathcal{L}_X$, for more details see \cite[Remark 3.3]{Martini_Ottazzi_Vallarino_spectral_multipliers_drift_2019}. Let us again emphasize the fact that in both of our result (Theorem \ref{Theorem: Metivier drift main theorem} and Corollary \ref{Corollary: Metivier drift corollary of main}) the smoothness condition on the multiplier $F$ is expressed in terms of the topological dimension $d$ of the underlying M\'etivier group $G$. Consequently, in the case of M\'etivier groups, our main result Theorem \ref{Theorem: Metivier drift main theorem} improves the required smoothness condition described in Theorem \ref{Theorem: Martini Ottazzi Vallarino theorem}, due to Martini, Ottazzi, and Vallarino \cite{Martini_Ottazzi_Vallarino_spectral_multipliers_drift_2019}.

\medskip
\textbf{Notations:}
Throughout the paper we use the standard notations. We shall use the notation $f \lesssim g$ to indicate $f \leq Cg$ for some $C > 0$, and whenever $f \lesssim g\lesssim f$, we shall write $f \sim g$. Also, we write $f \lesssim_{\epsilon} g$ when the implicit constant $C$ may depend on a parameter like $\epsilon$. Also, $ B := B(x, r)$ denote the ball with centered at $x$ and radius $r$ and the notation $\kappa B=B(x, \kappa r)$ stands for the concentric dilation of $B$ by $\kappa >0$. For a ball $B$, write $B^c$ to denote the complement of the ball $B$.

\medskip
\textbf{Organization:}
The paper is organized as follows. In Section \ref{Section: Preliminaries and kernel estimate} we collect some preliminary details about distance function, balls and character on M\'etivier group. After that we prove a first-layer weighted Plancherel estimate where the multiplier is not assumed to be compactly supported. Consequently we also discuss some $L^1$ and $L^2$ kernel estimates. Section \ref{Section: Proof of main theorem} is devoted to the proof of our main Theorem \ref{Theorem: Metivier drift main theorem}, upon assuming three claims. Finally in Section \ref{Section: Proof of claims} we give the proof of the mentioned three claims \ref{Assumption 1}, \ref{Assumption 2} and \ref{Assumption 3} in Section \ref{Section: Proof of main theorem}.


\section{Preliminaries and Kernel estimates}
\label{Section: Preliminaries and kernel estimate}
The spectral decomposition for the sub-Laplacian $\mathcal{L}$ on M\'etivier groups has been well studied and explicitly known \cite{Niedorf_Metivier_Group_2025, Martini_Muller_New_Class_Two_Step_Stratified_Groups_2014}. For a bounded Borel function $F: \R \to \C$, the operator $F(\sqrt{\mathcal{L}})$ possess convolution kernel $\mathcal{K}_{F(\sqrt{\mathcal{L}})}$, that is, 
\begin{align*}
    F(\sqrt{\mathcal{L}})f(x,u) &= f\ast \mathcal{K}_{F(\sqrt{\mathcal{L}})}(x,u) = \int_{G} f(y,s) \, \mathcal{K}_{F(\sqrt{\mathcal{L}})}((y,s)^{-1}(x,u)) \, d(y, s) .
\end{align*}
Let $F$ be a Schwartz class function, then the convolution kernel $\mathcal{K}_{F(\sqrt{\mathcal{L}})}$ is also a Schwartz class function and is given by 
\begin{align*}
\mathcal{K}_{F(\sqrt{\mathcal{L}})}(x,u) &= \tfrac{1}{(2 \pi)^{d_2}} \int_{\mathfrak{g}_{2,r}^{*}} \sum_{\mathbf{k} \in \mathbb{N}^{\Lambda}} F(\sqrt{\eta_{\mathbf{k}}^{\lambda}}) \prod_{n=1}^{\Lambda} \varphi_{k_n}^{(b_n^{\lambda}, r_n)}(R_{\lambda}^{-1} P_n^{\lambda}x) e^{i \langle \lambda, u \rangle} \, d \lambda,
\end{align*}
where $\Lambda \in \mathbb{N}$, $(b_1^{\lambda}, \ldots, b_{\Lambda}^{\lambda}) \in (0, \infty)^\Lambda$, $(r_1, \ldots, r_\Lambda) \in \mathbb{N}^\Lambda$, $\mathbf{k}=(k_1, \ldots, k_\Lambda) \in \mathbb{N}_0^\Lambda$, $\mathfrak{g}_{2,r}^{*}$ is the Zariski open subset of $\mathfrak{g}_{2}^{*}$, $\eta_{\mathbf{k}}^{\lambda} = \sum_{n=1}^{\Lambda} (2 k_n + r_n) b_n^{\lambda}$, the functions $\lambda \to b_n^{\lambda}$ are homogeneous of degree $1$, the functions $\lambda \to R_{\lambda}$ are Borel measurable on $\mathfrak{g}_{2,r}^{*}$ and homogeneous of degree $0$, the functions $\lambda \to P_{n}^{\lambda}$ are (componentwise) real analytic on $\mathfrak{g}_{2,r}^{*}$ and homogeneous of degree $0$ and $\varphi_{k}^{(\mu, m)}(z) = \mu^m L^{m-1}_k(\tfrac{1}{2}\mu |z|^2) e^{-\frac{1}{2}\mu |z|^2}$ for $z \in \mathbb{R}^{2m}$, $\mu>0$ is the $\mu$-rescaled Laguerre function, where $L^{m-1}_k$ denotes the $k$-th Laguerre polynomial of type $m-1$ (see \cite[Proposition 3.1]{Niedorf_Metivier_Group_2025}).

\medskip
Let $\varrho$ denote the Carnot-Carath\'eodory distance on $G$ and $B((x,u),r)$ denote the ball centered at $(x,u)$ and of radius $r$ with respect $\varrho$. Then the volume of a ball is given by
\begin{align*}
    |B((x,u),R)| \sim R^Q |B(0,1)|,
\end{align*}
where $|\cdot|$ denote the Lebesgue measure of $\mathbb{R}^{d_1} \times \mathbb{R}^{d_2}$ and $Q=d_1+2d_2$ is the homogeneous dimension of $G$. We also define the homogeneous norm on $G$ by
\begin{align*}
    \|(x,u)\| &:= |x| + |u|^{1/2}, \quad \quad \text{for} \quad (x,u) \in G .
\end{align*} 
It is well known that $\varrho((x,u), 0) \sim \|(x,u)\|$, see \cite[p. 8]{Folland_Stein_Hardy_space_homogeneous_group_1982}.

Recall that, $\chi$ is positive character on $G$, such that for $X \in \mathfrak{g}$ we have $X = \sum_{j=1}^{d_1} d\chi_{0}(X_j) X_{j}$. The following lemma tells about the pointwise growth of the character $\chi$ and $\mu_X$ measure of a ball.

\begin{lemma}\cite[Lemma 2.2]{Martini_Ottazzi_Vallarino_spectral_multipliers_drift_2019}
\label{Lemma: Pointwise estimate of character}
    For all $(x,u) \in G$, we have
    \begin{align*}
        \chi(x,u) &\lesssim e^{|X| \|(x,u)\|} .
    \end{align*}
    Moreover, for all $R \geq 1$, we have
    \begin{align*}
        \mu_X (B(0, R)) &\lesssim R^{Q-1} e^{|X| R} .
    \end{align*}
\end{lemma}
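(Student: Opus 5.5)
The statement to prove is Lemma \ref{Lemma: Pointwise estimate of character}. The plan rests on one structural fact: a positive character on $G$ is the exponential of a Lie algebra homomorphism $\mathfrak{g}\to\mathbb{R}$, and such a homomorphism vanishes on $[\mathfrak{g},\mathfrak{g}]=\mathfrak{g}_2$.

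\emph{Structure of $\chi$.} Write $\chi=e^{\ell}$, where $\ell:G\to\mathbb{R}$ is a continuous homomorphism. Composing with $\exp$, $\ell\circ\exp$ is a linear functional on $\mathfrak{g}$ that kills $\mathfrak{g}_2$. In exponential coordinates this gives
\begin{align*}
\chi(x,u)=\exp\Big(\sum_{j=1}^{d_1} d\chi_0(X_j)\,x_j\Big).
\end{align*}
Cauchy--Schwarz then yields $\chi(x,u)\le e^{|X||x|}\le e^{|X|\,\|(x,u)\|}$, since $\|(x,u)\|=|x|+|u|^{1/2}$. This proves the first claim.

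\emph{Ball estimate.} Since $\varrho\sim\|\cdot\|$, there is $c>0$ with $B(0,R)\subset\{|x|\le cR,\ |u|\le c^2R^2\}$. The crude bound $\chi\le e^{|X|\|\cdot\|}$ would only give $R^Q e^{c|X|R}$, which is too weak. Instead I will use two sharper facts.
\begin{itemize}
\item The Carnot--Carath\'eodory distance controls the first-layer projection exactly: horizontal curves project onto curves in $\mathbb{R}^{d_1}$ of the same length. Hence $(x,u)\in B(0,R)$ implies $|x|<R$, so $\langle a,x\rangle<|X|R$, where $a=(d\chi_0(X_j))_j$.
\item Rotate coordinates so that $a=|X|e_1$, and integrate $e^{|X|x_1}$ over $B(0,R)$ using the Fubini split $x=(x_1,x')$, $u$.
\end{itemize}

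\emph{Main obstacle.} The key step is showing that the slab $\{(x,u)\in B(0,R): x_1>R-s\}$ has measure $\lesssim s\,R^{Q-1}$ for $0<s\le R$. Granting this,
\begin{align*}
\mu_X(B(0,R))\lesssim \int_0^{R}e^{|X|(R-s)}\,d\big(sR^{Q-1}\big)\lesssim R^{Q-1}e^{|X|R},
\end{align*}
because $\int_0^\infty e^{-|X|s}\,ds<\infty$. This is the only place where the loss of exactly one power of $R$ enters.

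To bound the slab, note that $x_1\in(R-s,R)$ has length $s$. For fixed $x_1$ in that range, the fibre $\{(x',u)\}$ has measure $\lesssim R^{d_1-1}R^{2d_2}$, because the ball sits inside the box $|x'|\le R$, $|u|\lesssim R^2$. This gives $s\,R^{d_1-1+2d_2}=s\,R^{Q-1}$, which is what we need. Since these box bounds are all that is used, the estimate follows without any finer information on the fibres.
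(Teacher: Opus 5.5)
Your argument is correct. It is also necessarily your own route, because the paper gives no proof of this lemma. It simply quotes \cite[Lemma 2.2]{Martini_Ottazzi_Vallarino_spectral_multipliers_drift_2019}, which sits in the general Lie-group setting of that work.

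You instead use the two-step structure explicitly, in two ways:
\begin{itemize}
\item Since $\log\chi$ is a homomorphism into $\mathbb{R}$ and $\mathfrak{g}_2=[\mathfrak{g}_1,\mathfrak{g}_1]$, you get the closed formula $\chi(x,u)=e^{\langle a,x\rangle}$. This gives $\chi(x,u)\le e^{|X||x|}$ with constant $1$.
\item Since in exponential coordinates $X_j=\partial_{x_j}+(\text{terms in }\partial_{u})$, you get $|x|\le\varrho((x,u),0)$. This keeps the exponent in the ball estimate at exactly $|X|R$.
\end{itemize}

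This gains something over a purely metric argument, which integrates the constant horizontal derivative of $\log\chi$ along horizontal curves. That argument works on any group, but it only yields $\chi\le e^{|X|\varrho(\cdot,0)}$. Since $\varrho$ and $\|\cdot\|$ are comparable only up to multiplicative constants, it does not by itself give the exponent $|X|\,\|(x,u)\|$ as the lemma is stated here. The price of your approach is that it is tied to exponential coordinates on a stratified group.

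There are two minor presentational issues in the ball estimate:
\begin{itemize}
\item The expression $\int_0^R e^{|X|(R-s)}\,d(sR^{Q-1})$ is only heuristic. An upper bound $m(s)\lesssim sR^{Q-1}$ on the distribution function cannot simply be substituted into a Stieltjes integral; you would need integration by parts or a decomposition into unit slabs.
\item The variable $x_1$ ranges over $(-R,R)$, not $(0,R)$.
\end{itemize}
Both disappear if you skip the slabs entirely. After rotating the $x$-variables, Fubini over the box $\{|x|<R,\ |u|\le c^2R^2\}$ gives
\[
\mu_X(B(0,R))\lesssim R^{2d_2}R^{d_1-1}\int_{-R}^{R}e^{|X|x_1}\,dx_1\lesssim |X|^{-1}R^{Q-1}e^{|X|R}.
\]
So the step you flagged as the main obstacle is a one-line estimate. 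Its constant depends on $|X|$, which is harmless since $X\neq0$ is fixed.
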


Let us record a useful Lemma, which provide estimates of the first-layer weight and homogeneous norms. This will be used later in our proofs.

\begin{lemma}\cite[Lemma 2.5]{Singh_2026_Stein_square_function}
\label{Lemma: Integration of distance and weight}
Let $R>0$. Then for all $\alpha+\beta>Q$ and all $0\leq \alpha <d_1$, we have
\begin{align*}
    \int_G \frac{d(x,u)}{(1+R\|(x,u)\|)^{\beta} (1+R|x|)^{\alpha}} &\leq C R^{-Q} .
\end{align*}
    
\end{lemma}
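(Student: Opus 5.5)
We prove the estimate by a dyadic decomposition of $G$ adapted to the homogeneous norm and the first-layer weight, followed by a direct volume count on each piece. By homogeneity we may take $R=1$. Indeed, the dilation $\delta_R(x,u)=(Rx,R^2u)$ has Jacobian $R^{Q}$, satisfies $\|\delta_R(x,u)\|=R\|(x,u)\|$ and $|Rx| = R|x|$. Substituting $(x,u)=\delta_{R^{-1}}(y,v)$ therefore gives
\begin{align*}
\int_G \frac{d(x,u)}{(1+R\|(x,u)\|)^{\beta}(1+R|x|)^{\alpha}} = R^{-Q}\int_G \frac{d(y,v)}{(1+\|(y,v)\|)^{\beta}(1+|y|)^{\alpha}} ,
\end{align*}
and it remains to show that the last integral is finite under the stated conditions on $\alpha,\beta$.

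We split $G$ into the unit ball $\{\|(y,v)\|\le 1\}$ and the dyadic shells $A_k=\{2^{k-1}<\|(y,v)\|\le 2^k\}$, $k\ge1$. On the unit ball the integrand is at most $1$ and the region is bounded, so its contribution is finite. On $A_k$ we have $(1+\|(y,v)\|)^{-\beta}\sim 2^{-k\beta}$, and $A_k$ is contained in $\{|y|\le 2^k,\ |v|\le 2^{2k}\}$. Integrating first in $v$ over a set of measure $\lesssim 2^{2kd_2}$, we obtain
\begin{align*}
\int_{A_k}\cdots \lesssim 2^{-k\beta}\,2^{2kd_2}\int_{|y|\le 2^k}\frac{dy}{(1+|y|)^{\alpha}} .
\end{align*}
This is the only place the hypothesis $\alpha<d_1$ enters. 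Since $0\le\alpha<d_1$, polar coordinates in $\mathbb{R}^{d_1}$ give $\int_{|y|\le 2^k}(1+|y|)^{-\alpha}\,dy\lesssim 2^{k(d_1-\alpha)}$. Without this condition the radial integral would only be bounded by $1$ or by $k$, and the final count would change.

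Putting the pieces together, the contribution of $A_k$ is $\lesssim 2^{k(d_1+2d_2-\alpha-\beta)}=2^{k(Q-\alpha-\beta)}$. This sums over $k\ge1$ to a finite constant precisely because $\alpha+\beta>Q$. Hence the integral at scale $R=1$ is bounded by a constant $C$ depending only on $\alpha,\beta,d_1,d_2$, and undoing the dilation yields the bound $C R^{-Q}$.
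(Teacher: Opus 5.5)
Your proof is correct and complete. The dilation $\delta_R$ reduces the claim to $R=1$. On each shell $A_k$, the box bound $A_k\subset\{|y|\le 2^k,\ |v|\le 2^{2k}\}$ combined with $\int_{|y|\le 2^k}(1+|y|)^{-\alpha}\,dy\lesssim 2^{k(d_1-\alpha)}$ (which is where $\alpha<d_1$ is used) gives the bound $2^{k(Q-\alpha-\beta)}$, and this is summable exactly when $\alpha+\beta>Q$.

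The paper gives no proof of this lemma; it only quotes it from Lemma 2.5 of the cited work of Singh. So there is no in-paper argument to compare with, and your proof supplies a self-contained justification.

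One step is left implicit. Bounding the integrand by $1$ on the unit ball uses $\beta\ge 0$. This holds automatically, since $\alpha<d_1$ and $\alpha+\beta>Q$ together force $\beta>Q-d_1=2d_2>0$.
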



We now prove a first-layer weighted Plancherel estimate, which is one of the main ingredients in the proof of Theorem \ref{Theorem: Metivier drift main theorem}. The proof of the following proposition is similar to that of  \cite[Proposition 6.1]{Niedorf_Metivier_Group_2025}, but it is worth noting that here we do not impose any support condition on the multiplier function $F$, which is essential later in the proof of Claim \ref{Assumption 3}.

\begin{proposition}
\label{Proposition: First layer weighted Plancherel}
If $F: \mathbb{R} \to \mathbb{C}$ is a bounded Borel function, then for all $0\leq \alpha<d_2/2$ we have
\begin{align}
\label{Estimate: First layer weighted Plancherel for limited alpha}
\int_G | |x|^{\alpha} \mathcal{K}_{F(\sqrt{\mathcal{L}})}(x,u) |^2 \, d(x,u) &\leq C \int_{0}^{\infty} |F(\eta)|^2 \eta^{Q-2\alpha} \frac{d\eta}{\eta} .
\end{align}
\end{proposition}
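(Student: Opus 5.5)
Since Plancherel in the central variable $u$ turns the $u$-integral into a $\lambda$-integral, the plan is to take the partial Fourier transform in $u$ and then follow the argument of \cite[Proposition 6.1]{Niedorf_Metivier_Group_2025}, with two changes: the support assumption on $F$ is replaced by an exact homogeneity computation, and the range $\alpha<d_2/2$ is used to ensure integrability near $\lambda=0$.

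\textbf{Step 1 (reduction to fixed $\lambda$).} By Plancherel in $u$,
\begin{align*}
\int_G ||x|^{\alpha}\mathcal{K}_{F(\sqrt{\mathcal{L}})}(x,u)|^2\,d(x,u) \sim \int_{\mathfrak{g}_{2,r}^*}\int_{\mathbb{R}^{d_1}} |x|^{2\alpha}\Big|\sum_{\mathbf{k}} F(\sqrt{\eta_{\mathbf{k}}^{\lambda}})\,\Phi_{\mathbf{k}}^{\lambda}(x)\Big|^2\,dx\,d\lambda ,
\end{align*}
where $\Phi_{\mathbf{k}}^{\lambda}(x)=\prod_n \varphi_{k_n}^{(b_n^{\lambda},r_n)}(R_\lambda^{-1}P_n^\lambda x)$. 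For $F$ Schwartz the formula for the kernel holds directly. For a general bounded Borel $F$, I would argue by density (monotone convergence / $L^2$ limits) once the estimate is known for nice $F$, since the right-hand side is then finite or the inequality is trivial.

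\textbf{Step 2 (the $x$-integral for fixed $\lambda$).} The key point is that the first layer weight costs no derivatives: one simply bounds $|x|^{2\alpha}$ trivially. For each fixed $\lambda$, the functions $\Phi_{\mathbf{k}}^{\lambda}$ are mutually orthogonal in $L^2(\mathbb{R}^{d_1})$ after the orthogonal change of variables given by $P_n^\lambda$ and $R_\lambda$. Their norms satisfy $\|\Phi_{\mathbf{k}}^\lambda\|_2^2 \sim \prod_n (b_n^\lambda)^{r_n}\binom{k_n+r_n-1}{k_n}$. The weighted norm is then controlled by the standard Laguerre bound
\begin{align*}
\int |z|^{2\alpha}|\varphi_k^{(\mu,m)}(z)|^2\,dz \lesssim \mu^{m}(k+1)^{m-1}\big((k+1)/\mu\big)^{\alpha},
\end{align*}
since $\varphi_k^{(\mu,m)}$ is concentrated on $|z|^2\lesssim (k+1)/\mu$. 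The cross terms are handled as follows. The weight $|x|^{2\alpha}$ destroys orthogonality, so instead of expanding the square I would use $|x|^{2\alpha}\lesssim \sum_n |P_n^\lambda x|^{2\alpha}$ and, for $\alpha$ integer or via interpolation, the fact that multiplication by $|z|^2$ maps the Laguerre function $\varphi_k$ into a combination of $\varphi_{k-1},\varphi_k,\varphi_{k+1}$. This is the standard tridiagonal relation used in Niedorf's proof. It gives
\begin{align*}
\int|x|^{2\alpha}\Big|\sum_{\mathbf{k}}F(\sqrt{\eta^\lambda_{\mathbf{k}}})\Phi^\lambda_{\mathbf{k}}\Big|^2dx \lesssim \sum_{\mathbf{k}}|F(\sqrt{\eta^\lambda_{\mathbf{k}}})|^2\,|\lambda|^{d_1/2}\,(1+|\mathbf{k}|)^{d_1/2-1}\,\big((1+|\mathbf{k}|)/|\lambda|\big)^{\alpha},
\end{align*}
using that $b_n^\lambda\sim|\lambda|$ on Métivier groups and $\sum_n r_n=d_1/2$. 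Here I would simply import the fixed-$\lambda$ estimate from \cite[Proposition 6.1]{Niedorf_Metivier_Group_2025}, which does not use the support of $F$.

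\textbf{Step 3 (the $\lambda$-integral).} Pass to polar coordinates $\lambda=\rho\omega$. For fixed $\mathbf{k}$ and $\omega$, substitute $\eta=\sqrt{\eta^{\lambda}_{\mathbf{k}}}$, so that $\eta^2\sim\rho(1+|\mathbf{k}|)$. The $\mathbf{k}$-term then becomes
\begin{align*}
\int_0^\infty |F(\eta)|^2\,\eta^{d_1+2d_2-2\alpha}\,(1+|\mathbf{k}|)^{\alpha-d_2-1}\,\frac{d\eta}{\eta}.
\end{align*}
Summing over $\mathbf{k}\in\mathbb{N}_0^\Lambda$ requires $\sum_{\mathbf{k}}(1+|\mathbf{k}|)^{\alpha-d_2-1}<\infty$. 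By comparison with the sum over $\Lambda$-dimensional lattice points, this needs $d_2+1-\alpha>\Lambda$. That is not obviously the restriction $\alpha<d_2/2$, so the honest bookkeeping of the $\mathbf{k}$-multiplicities against the Jacobian in $\rho$ is the main obstacle. I would first try to redo the counting by grouping the $\mathbf{k}$ with $|\mathbf{k}|\sim 2^j$. Alternatively, I would integrate in $\rho$ first, for fixed eigenvalue level: the measure $\rho^{d_2-1}d\rho$ must absorb the weight $\rho^{-\alpha}$, and the condition $\alpha<d_2/2$ should be exactly what makes $\rho^{d_2-1-\alpha}$ integrable near $0$ while the resulting power of $(1+|\mathbf{k}|)$ remains summable. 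I expect $\alpha<d_2/2$ to be precisely the summability threshold once the mixed structure of $\eta^\lambda_{\mathbf{k}}$ is accounted for.
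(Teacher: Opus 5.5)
Your route is the same as the paper's. You take Plancherel in $u$, then a fixed-$\lambda$ bound in which the first-layer weight only costs $\prod_n\big((2k_n+r_n)/b_n^\lambda\big)^{m_n}\lesssim\big((1+|\mathbf{k}|)/|\lambda|\big)^{\alpha}$, then polar coordinates and a sum over $\mathbf{k}$. The paper gets the fixed-$\lambda$ bound blockwise from the sub-elliptic estimate $\||z|^{\beta}f\|\lesssim\mu^{-\beta}\|(H^{(\mu)})^{\beta/2}f\|$ rather than from the tridiagonal Laguerre relation, but that difference is immaterial. However, your proof is not finished. The one step where $\alpha<d_2/2$ has to enter is left open, and the condition you reach, $\Lambda<d_2+1-\alpha$, comes from two bookkeeping errors. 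A sanity check shows something is wrong: at $\alpha=0$ your bound would make the ordinary Plancherel formula diverge as soon as $\Lambda\geq d_2+1$. That case does occur, e.g.\ $d_2=1$ with two distinct $b_n$.

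\emph{First error.} In Step 2 you state correctly that $\|\Phi^\lambda_{\mathbf{k}}\|_2^2\sim\prod_n(b_n^\lambda)^{r_n}\binom{k_n+r_n-1}{k_n}$. But in the displayed bound you replace $\prod_n(k_n+1)^{r_n-1}$ by $(1+|\mathbf{k}|)^{d_1/2-1}$. The latter is the multiplicity of the whole level $|\mathbf{k}|=K$, namely $\sum_{|\mathbf{k}|=K}\prod_n(k_n+1)^{r_n-1}\sim K^{d_1/2-1}$, since $\sum_n r_n=d_1/2$. Using it for each $\mathbf{k}$ and then also summing over the $\sim K^{\Lambda-1}$ multi-indices of that level overcounts by $K^{\Lambda-1}$. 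The correct per-term bound is $\prod_n(k_n+1)^{r_n-1}\leq(1+|\mathbf{k}|)^{d_1/2-\Lambda}$.

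\emph{Second error.} In Step 3, the factor $|\lambda|^{-\alpha}=\rho^{-\alpha}$ coming from the weight produces, after $\rho\sim\eta^2/(1+|\mathbf{k}|)$, a further $(1+|\mathbf{k}|)^{\alpha}$. So the exponent must contain $2\alpha$, not $\alpha$. Indeed
\begin{align*}
\int_0^\infty\rho^{\frac{d_1}{2}-\alpha+d_2}\,\big|F\big(\sqrt{\rho\,\eta^\omega_{\mathbf{k}}}\big)\big|^2\,\frac{d\rho}{\rho}\sim(1+|\mathbf{k}|)^{\alpha-\frac{d_1}{2}-d_2}\int_0^\infty|F(\eta)|^2\eta^{Q-2\alpha}\,\frac{d\eta}{\eta}.
\end{align*}
Multiplying by $(1+|\mathbf{k}|)^{\alpha}\prod_n(k_n+1)^{r_n-1}\leq(1+|\mathbf{k}|)^{\alpha+d_1/2-\Lambda}$ leaves
\begin{align*}
\sum_{\mathbf{k}\in\mathbb{N}_0^\Lambda}(1+|\mathbf{k}|)^{2\alpha-d_2-\Lambda}\sim\sum_{K\geq 0}(1+K)^{\Lambda-1}(1+K)^{2\alpha-d_2-\Lambda}=\sum_{K\geq 0}(1+K)^{2\alpha-d_2-1},
\end{align*}
which converges exactly when $\alpha<d_2/2$. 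This is the paper's computation.

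Your closing heuristic also points the wrong way. The $\rho$-integrand is $\rho^{d_1/2-\alpha+d_2-1}$, which is harmless at $0$, and integrability of $\rho^{d_2-1-\alpha}$ would only require $\alpha<d_2$. The threshold $d_2/2$ is a summability condition over the spectral levels $\mathbf{k}$. Equivalently, for fixed $\eta$, it concerns the discrete points $|\lambda|\sim\eta^2/(1+|\mathbf{k}|)\to 0$; your remark about the region near $\lambda=0$ is right only in this discrete sense.
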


\begin{proof}
Via interpolation, it suffices to prove (\ref{Estimate: First layer weighted Plancherel for limited alpha}) for $\alpha \in \mathbb{N}$. Therefore from Plancherel's theorem
\begin{align*}
   \int_G | |x|^{\alpha} \mathcal{K}_{F(\sqrt{\mathcal{L}})}(x,u) |^2 \, d(x,u) &= C \int_{\mathfrak{g}_{2,r}^{*}} \int_{\mathfrak{g}_1} |x|^{2 \alpha} \Big| \sum_{\mathbf{k} \in \mathbb{N}^{\Lambda}} F(\sqrt{\eta_{\mathbf{k}}^{\lambda}}) \prod_{n=1}^{\Lambda} \varphi_{k_n}^{(b_n^{\lambda}, r_n)}(R_{\lambda}^{-1} P_n^{\lambda}x) \Big|^2 dx \, d\lambda .
\end{align*}
Note that $P_n^{\lambda} R_{\lambda} = R_{\lambda} P_n$ for all $\lambda \in \mathfrak{g}_{2,r}^{*}$, where $P_n$ is the projection from $\mathbb{R}^{d_1}= \mathbb{R}^{2 r_1}\oplus \cdots \oplus \mathbb{R}^{2 r_{\Lambda}}$ onto $\mathbb{R}^n$ (see \cite[Proposition 3.1]{Niedorf_Metivier_Group_2025}). Making the change of variable $y= R_{\lambda}^{-1}x$ and then writing $|y|^{2\alpha} = (|P_1 y|^2 + \cdots + |P_{\Lambda} y|^2)^{\alpha}$, the right hand side of the above expression is dominated by
\begin{align}
\label{estimate: Reduced weighted Plancherel estimate I for limited alpha}
    C \int_{\mathfrak{g}_{2,r}^{*}} \int_{\mathbb{R}^{d_1}} \Big| \left(\prod_{n=1}^{\Lambda} |P_n y|^{m_n} \right) \sum_{\mathbf{k} \in \mathbb{N}^{\Lambda}} F(\sqrt{\eta_{\mathbf{k}}^{\lambda}}) \prod_{n=1}^{\Lambda} \varphi_{k_n}^{(b_n^{\lambda}, r_n)}(P_n y) \Big|^2 dy \, d\lambda ,
\end{align}
where $\sum_{i=1}^{\Lambda} m_i= \alpha$ with $m_i\in \mathbb{N}$ for $i=1, \ldots, \Lambda$.

From (6.3) of \cite[Theorem 6.1]{Niedorf_Metivier_Group_2025} we have the following sub-elliptic estimate: for $\beta \geq 0$, $m \in \mathbb{N} \setminus \{0\}$ and $\eta>0$
\begin{align*}
\| |\cdot|^{\beta} f\|_{L^2(\mathbb{R}^{2m})} &\lesssim \eta^{-\beta} \| (H_{\mathbb{R}^{2m}}^{(\eta)})^{\beta/2} f \|_{L^2(\mathbb{R}^{2m})}, \quad \text{where} \quad H_{\mathbb{R}^{2m}}^{(\eta)} = -\Delta_z + \tfrac{\eta^2}{4}|z|^2 \quad \text{on} \quad \mathbb{R}^{2m},
\end{align*}
and $\varphi_k^{(\mu,m)}$ are the eigen-functions of $H_{\mathbb{R}^{2m}}^{(\eta)}$ with eigenvalue $(2k+m)\mu$.

Using the above sub-elliptic estimate on every block of $\mathbb{R}^{d_1} = \mathbb{R}^{2 r_1} \oplus \cdots \oplus \mathbb{R}^{2 r_{\Lambda}}$, the term in (\ref{estimate: Reduced weighted Plancherel estimate I for limited alpha}) can be dominated by
    \begin{align}
    \label{estimate: Reduced weighted Plancherel estimate II for limited alpha}
        C \int_{\mathfrak{g}_{2,r}^{*}} \int_{\mathbb{R}^{d_1}} \Big| \sum_{\mathbf{k} \in \mathbb{N}^{\Lambda}} F(\sqrt{\eta_{\mathbf{k}}^{\lambda}}) \prod_{n=1}^{\Lambda} (b_n^{\lambda})^{-m_n} ((2k_n+r_n)b_n^{\lambda})^{m_n/2} \varphi_{k_n}^{(b_n^{\lambda}, r_n)}(P_n y) \Big|^2 dy \, d\lambda .
    \end{align}
    Now using orthogonality and $\|\varphi_{k_n}^{(b_n^{\lambda}, r_n)}\|_{L^2(\mathbb{R}^{2 r_n})}^2 \sim (b_n^{\lambda})^{r_n} (k_n+1)^{r_n-1}$ (see \cite[proof of Proposition 6.1]{Niedorf_Metivier_Group_2025}), (\ref{estimate: Reduced weighted Plancherel estimate II for limited alpha}) can be bounded by
    \begin{align}
    \label{estimate: Reduced weighted Plancherel estimate IV for limited alpha}
        C \sum_{\mathbf{k} \in \mathbb{N}^{\Lambda}} \int_{\mathfrak{g}_{2,r}^{*}} |F(\sqrt{\eta_{\mathbf{k}}^{\lambda}})|^2 \prod_{n=1}^{\Lambda} \frac{(2k_n+r_n)^{m_n}}{(b_n^{\lambda})^{m_n}} (b_n^{\lambda})^{r_n} (k_n+1)^{r_n-1} \, d\lambda .
    \end{align}
    Now changing $\lambda$ into polar coordinates, that is, $\lambda = \rho \omega$ with $\rho \in [0, \infty)$, $|\omega|=1$ and since $\lambda \mapsto b_n^{\lambda}$ are homogeneous of degree $1$, $\eta_{\mathbf{k}}^{\lambda} = \rho \eta_{\mathbf{k}}^{\omega}$ and again substituting $\rho = (\eta_{\mathbf{k}}^{\omega})^{-1} \eta$ in the inner integral, (\ref{estimate: Reduced weighted Plancherel estimate IV for limited alpha}) is estimated by constant times
    \begin{align}
    \label{Using polar cordinate and change of variable}
        & \int_{0}^{\infty} \int_{S^{d_2-1}} \sum_{\mathbf{k} \in \mathbb{N}^{\Lambda}} |F(\sqrt{\eta})|^2 \left[\prod_{n=1}^{\Lambda} \frac{(2k_n+r_n)^{m_n}}{(\eta_{\mathbf{k}}^{\omega})^{-m_n} \eta^{m_n} (b_n^{\omega})^{m_n}} \frac{\eta^{r_n}}{(\eta_{\mathbf{k}}^{\omega})^{r_n}} (b_n^{\omega})^{r_n} (k_n+1)^{r_n-1} \right] \frac{\eta^{d_2}}{(\eta_{\mathbf{k}}^{\omega})^{d_2}} \frac{d\eta}{\eta} d\sigma(\omega) \\
        &\nonumber = \int_{0}^{\infty} \int_{S^{d_2-1}} \sum_{\mathbf{k} \in \mathbb{N}^{\Lambda}} |F(\sqrt{\eta})|^2 \left[ \prod_{n=1}^{\Lambda} (2k_n+r_n)^{m_n+r_n-1} (\eta_{\mathbf{k}}^{\omega})^{m_n-r_n} (b_n^{\omega})^{r_n-m_n} \right] \frac{\eta^{Q/2-\alpha}}{(\eta_{\mathbf{k}}^{\omega})^{d_2}} \frac{d\eta}{\eta} \, d\sigma(\omega) .
    \end{align}
    Since $G$ is M\'etivier group, the endomorphism $J_{\lambda}$ is invertible for all $\lambda \in \mathfrak{g}_2^{*} \setminus \{0\}$. As $b_1^{\lambda}, \ldots, b_{\Lambda}^{\lambda}$ are the non-negative eigenvalues of $i J_{\lambda}$, this necessarily means that $b_n^{\lambda} \neq 0$ for all $\lambda \in \mathfrak{g}_2^{*} \setminus \{0\}$ (see \cite[Lemma 5]{Martini_Muller_New_Class_Two_Step_Stratified_Groups_2014}). Thus the image of $S^{d_2-1}$ under the continuous map $\lambda \mapsto \mathbf{b}^{\lambda}$ is compact subset of $(0, \infty)^{\Lambda}$. In particular 
    \begin{align*}
        b_n^{\lambda} \sim 1 \quad \text{for all}\quad \lambda \in S^{d_2-1}\ \text{and}\ n \in \{1, \ldots, {\Lambda}\}.
    \end{align*}
    Now using the fact $(|\mathbf{k}|+1) \sim \eta_{\mathbf{k}}^{\omega} \geq (2k_n+r_n) b_n^{\omega}$ and $b_n^{\omega} \sim 1$ for $2\alpha<d_2$, the last expression in \eqref{Using polar cordinate and change of variable} is further dominated by
    \begin{align*}
        & C \int_{0}^{\infty} \int_{S^{d_2-1}} \sum_{\mathbf{k} \in \mathbb{N}^{\Lambda}} |F(\sqrt{\eta})|^2 \eta^{Q/2-\alpha} \left[ \prod_{n=1}^{\Lambda} (\eta_{\mathbf{k}}^{\omega})^{m_n+r_n-1} (\eta_{\mathbf{k}}^{\omega})^{m_n-r_n} (b_n^{\omega})^{-2m_n+1} \right] (\eta_{\mathbf{k}}^{\omega})^{-d_2} \frac{d\eta}{\eta} \, d\sigma(\omega) \\
        &\lesssim \int_{0}^{\infty} \int_{S^{d_2-1}} \sum_{\mathbf{k} \in \mathbb{N}^{\Lambda}} |F(\eta)|^2 \eta^{Q-2\alpha} \left[ \prod_{n=1}^{\Lambda} (\eta_{\mathbf{k}}^{\omega})^{2m_n-1} \right] (\eta_{\mathbf{k}}^{\omega})^{-d_2} \frac{d\eta}{\eta} \, d\sigma(\omega) \\
        &\lesssim \int_{0}^{\infty} \int_{S^{d_2-1}} |F(\eta)|^2 \eta^{Q-2\alpha} \sum_{\mathbf{k} \in \mathbb{N}^{\Lambda}} (|\mathbf{k}|+1)^{2\alpha-N-d_2} \frac{d\eta}{\eta} \, d\sigma(\omega) \\
        &\lesssim \int_{0}^{\infty} |F(\eta)|^2 \eta^{Q-2\alpha} \frac{d\eta}{\eta} .
    \end{align*}
This completes the proof of the proposition.
\end{proof}

If, in addition, we take $F$ to be compactly supported in the above Proposition, then we get the following corollary. See also \cite[Theorem 3.2]{Hebisch_Zienkiewicz_Multiplier_Metivier_1995}, \cite[Proposition 3.2]{Bagchi_Molla_Singh_Bilinear_Metivier_JFA}.
\begin{corollary}
\label{Corollary: First layer weighted Planchrel dilated version}
    If $F: \mathbb{R} \to \mathbb{C}$ is a bounded Borel function supported in $[0, R]$, then for all $0\leq \alpha <d_2/2$ we have
    \begin{align*}
        \int_G | |x|^{\alpha} \mathcal{K}_{F(\sqrt{\mathcal{L}})}(x,u)|^2 \ d(x,u) &\leq C R^{Q-2\alpha} \|F(R \cdot)\|_{L^2}^2 .
    \end{align*}
\end{corollary}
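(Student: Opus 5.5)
The plan is to deduce the corollary directly from Proposition \ref{Proposition: First layer weighted Plancherel}, applying it to $F$ and then rescaling the resulting integral. Since $F$ is bounded and Borel, the proposition gives, for $0\le\alpha<d_2/2$,
\begin{align*}
\int_G \big| |x|^{\alpha}\mathcal{K}_{F(\sqrt{\mathcal{L}})}(x,u)\big|^2\,d(x,u) \le C\int_0^\infty |F(\eta)|^2\,\eta^{Q-2\alpha}\,\frac{d\eta}{\eta}.
\end{align*}
The support hypothesis $\supp F\subseteq[0,R]$ truncates the integral on the right to $[0,R]$.

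Next, substitute $\eta=R\tau$ with $\tau\in[0,1]$. The right-hand side becomes
\begin{align*}
C R^{Q-2\alpha}\int_0^1 |F(R\tau)|^2\,\tau^{Q-2\alpha-1}\,d\tau .
\end{align*}
It remains to bound the weight $\tau^{Q-2\alpha-1}$ by $1$ on $[0,1]$, which holds as soon as the exponent is nonnegative. Since $\alpha<d_2/2$, we have $Q-2\alpha-1> d_1+2d_2-d_2-1=d_1+d_2-1\ge 0$. The integral is therefore at most $\int_0^1|F(R\tau)|^2\,d\tau\le\|F(R\cdot)\|_{L^2}^2$, which gives the claimed bound $C R^{Q-2\alpha}\|F(R\cdot)\|_{L^2}^2$.

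I do not see a genuine obstacle here. The only step needing care is checking that the power of $\tau$ is nonnegative, so that the weight can be bounded by $1$ near the origin; the constraint $\alpha<d_2/2$ settles this. If the point $\eta=0$ were a concern, note that it carries no measure, so nothing changes.
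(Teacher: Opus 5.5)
Your proposal is correct and is the paper's own argument: apply the preceding first-layer weighted Plancherel proposition, truncate the $\eta$-integral to $[0,R]$ using the support of $F$, and rescale by $\eta=R\tau$. Your explicit check that $Q-2\alpha-1>d_1+d_2-1\geq 0$, so that $\tau^{Q-2\alpha-1}\leq 1$ on $[0,1]$, is exactly the condition needed; the paper states it more loosely as ``$2\alpha-1<Q$''.
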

\begin{proof}
From Proposition \ref{Proposition: First layer weighted Plancherel}, using the fact that $F$ is compactly supported in $[0,R]$ and $2\alpha-1<Q$ we obtain
\begin{align*}
    \int_G | |x|^{\alpha} \mathcal{K}_{F(\sqrt{\mathcal{L}})}(x,u) |^2 \ d(x,u) \leq C \int_{0}^{R} |F(\eta)|^2 \eta^{Q-2\alpha} \frac{d\eta}{\eta} \leq C R^{Q-2\alpha} \|F(R \cdot)\|_{L^2}^2 ,
\end{align*}
which completes the proof of the corollary.
\end{proof}

The following proposition is known as weighted Plancherel estimates for the sub-Laplacian on M\'etivier groups with extra first-layer weights. This type of estimates are useful in proving sharp spectral multiplier estimates, see \cite{Muller_Stein_Spectral_Multiplier_Heisenberg_Related_groups_1994}, \cite{Martini_Muller_New_Class_Two_Step_Stratified_Groups_2014}, \cite{Hebisch_Zienkiewicz_Multiplier_Metivier_1995} and references therein.
\begin{proposition}
\label{Prop: Weighted Plancherel using weight and distance}
If $F: \mathbb{R} \to \mathbb{C}$ is a bounded Borel function supported in $[0, R]$, then for all $\beta \geq 0$, all $\epsilon, R>0$, all $0\leq \alpha <d_2/2$ we have
    \begin{align*}
        \left(\int_G | (1+R\|(x,u)\|)^{\beta} (1+R|x|)^{\alpha} \mathcal{K}_{F(\sqrt{\mathcal{L}})}(x,u)|^2 \, d(x,u) \right)^{1/2} &\leq C R^{Q/2} \|F(R \cdot)\|_{L^2_{\beta+\epsilon}} .
    \end{align*}
\end{proposition}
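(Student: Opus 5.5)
By homogeneity we first reduce to $R=1$. If $F$ is supported in $[0,R]$, then $\mathcal{K}_{F(\sqrt{\mathcal{L}})}(x,u)=R^{Q}\mathcal{K}_{F(R\cdot)(\sqrt{\mathcal{L}})}(Rx,R^2u)$. Substituting $(x,u)\mapsto(R^{-1}x,R^{-2}u)$ turns the left-hand side into $R^{Q/2}$ times the same quantity for $F(R\cdot)$ with $R=1$. So it suffices to prove
\begin{align*}
\big\| (1+\|(x,u)\|)^{\beta}(1+|x|)^{\alpha}\mathcal{K}_{F(\sqrt{\mathcal{L}})}\big\|_{L^2(G)} \lesssim \|F\|_{L^2_{\beta+\epsilon}}
\end{align*}
for $F$ supported in $[0,1]$. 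We may replace $F$ by its even extension, which changes neither $F(\sqrt{\mathcal{L}})$ nor, up to constants, the Sobolev norm.

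We then decompose $F$ in frequency, in the spirit of the Müller--Stein and Martini--Müller arguments. Take a dyadic partition of unity $1=\sum_{k\ge 0}\phi_k$ on the Fourier side, with $\phi_0$ supported in $|\tau|\le 2$ and $\phi_k$ supported in $|\tau|\sim 2^k$, and set $F_k:=(\hat F\phi_k)^{\vee}$. Each $F_k$ is even, and
\begin{align*}
F_k(\sqrt{\mathcal{L}})=\tfrac{1}{2\pi}\int \hat F(\tau)\phi_k(\tau)\cos(\tau\sqrt{\mathcal{L}})\,d\tau .
\end{align*}
By finite propagation speed for the wave operator $\cos(\tau\sqrt{\mathcal{L}})$ with respect to $\varrho$, the kernel $\mathcal{K}_{F_k(\sqrt{\mathcal{L}})}$ is supported in $\{\|(x,u)\|\lesssim 2^{k}\}$. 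On that support $(1+\|(x,u)\|)^{\beta}\lesssim 2^{k\beta}$, and hence
\begin{align*}
\big\| (1+\|\cdot\|)^{\beta}(1+|x|)^{\alpha}\mathcal{K}_{F_k(\sqrt{\mathcal{L}})}\big\|_2\lesssim 2^{k\beta}\big\|(1+|x|)^{\alpha}\mathcal{K}_{F_k(\sqrt{\mathcal{L}})}\big\|_2 .
\end{align*}

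The functions $F_k$ are no longer compactly supported, so Corollary \ref{Corollary: First layer weighted Planchrel dilated version} does not apply. Instead we use Proposition \ref{Proposition: First layer weighted Plancherel}, which imposes no support condition, with exponents $0$ and $\alpha$. This gives
\begin{align*}
\big\|(1+|x|)^{\alpha}\mathcal{K}_{F_k(\sqrt{\mathcal{L}})}\big\|_2^2\lesssim \int_0^\infty |F_k(\eta)|^2(\eta^{Q-2\alpha}+\eta^{Q})\,\frac{d\eta}{\eta}.
\end{align*}
We split this integral into two ranges.
\begin{enumerate}
\item On $[0,2]$, we have $Q-2\alpha-1>d_1-1\ge 0$ because $2\alpha<d_2$. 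So the weight is bounded and this part is $\lesssim\|F_k\|_{L^2}^2$.
\item On $\eta>2$, the point $\eta$ is at distance $\gtrsim\eta$ from $\supp F\subset[-1,1]$. Since $F_k=F*\check\phi_k$ with $|\check\phi_k(y)|\lesssim_N 2^{k}(1+2^k|y|)^{-N}$, we get $|F_k(\eta)|\lesssim_N 2^{-k(N-1)}\eta^{-N}\|F\|_{L^1}$. This tail is therefore $\lesssim 2^{-kN'}\|F\|_{L^2}^2$ for any $N'$.
\end{enumerate}

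Combining the two ranges and summing over $k$, we obtain
\begin{align*}
\sum_k 2^{k\beta}\big(\|F_k\|_2+2^{-kN'}\|F\|_2\big)\lesssim \Big(\sum_k 2^{2k(\beta+\epsilon)}\|F_k\|_2^2\Big)^{1/2}+\|F\|_2\lesssim\|F\|_{L^2_{\beta+\epsilon}},
\end{align*}
where the middle step is Cauchy--Schwarz, using the summable factor $2^{-k\epsilon}$. The main obstacle is exactly this loss of compact support of the pieces $F_k$. It is why the support-free Proposition \ref{Proposition: First layer weighted Plancherel} is needed, together with the observation that both the small-$\eta$ singularity $\eta^{Q-2\alpha-1}$ and the large-$\eta$ growth $\eta^{Q-1}$ are harmless for $F_k$. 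A secondary point to check is finite propagation speed for $\sqrt{\mathcal{L}}$ with respect to the Carnot--Carath\'eodory distance, which is standard for sub-Laplacians.
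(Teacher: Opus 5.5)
Your proof is correct, but it takes a different route from the paper's.

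\textbf{The paper's route.} The paper does not decompose $F$. It quotes Martini's weighted Plancherel estimate $\|(1+R\|\cdot\|)^{\beta}\mathcal{K}_{F(\sqrt{\mathcal{L}})}\|_{2}\lesssim R^{Q/2}\|F(R\cdot)\|_{L^\infty_{\beta+\epsilon}}$ for compactly supported $F$. Using $|x|\le\|(x,u)\|$ and Sobolev embedding, it adds the first-layer weight $(1+R|x|)^{\alpha}$ at the cost of $L^2_{\beta+\alpha+1/2+\epsilon}$. It then interpolates, in the style of Mauceri--Meda, against the compactly supported first-layer estimate of Corollary~\ref{Corollary: First layer weighted Planchrel dilated version}, which costs only $\|F(R\cdot)\|_{L^2}$. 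Letting the homogeneous-norm exponent tend to infinity removes the extra $\alpha+1/2$ derivatives.

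\textbf{Your route.} You run the M\"uller--Stein/Hebisch mechanism directly:
\begin{enumerate}
\item a Littlewood--Paley decomposition of $\widehat F$;
\item finite propagation speed, which turns the weight $(1+\|(x,u)\|)^\beta$ into the factor $2^{k\beta}$;
\item the support-free Proposition~\ref{Proposition: First layer weighted Plancherel}, with exponents $0$ and $\alpha$, applied to the non-compactly supported pieces $F_k$.
\end{enumerate}
The range $\eta\le 2$ is controlled because $Q-2\alpha-1>0$, and the range $\eta>2$ by the Schwartz tails of $\check\phi_k$.

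\textbf{Comparison.} Your argument is more self-contained. It needs neither the external $L^\infty_{\beta+\epsilon}$ estimate nor a complex interpolation of weighted spaces, and it lands directly on the $L^2_{\beta+\epsilon}$ norm. It also makes a second use of the support-free form of Proposition~\ref{Proposition: First layer weighted Plancherel}, which the paper otherwise exploits only in Claim~\ref{Assumption 3}. The paper's route is shorter on the page because it outsources the propagation-speed argument to Martini's theorem.

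\textbf{One step to write out.} You should justify the triangle inequality for the infinite sum $\sum_k\mathcal{K}_{F_k(\sqrt{\mathcal{L}})}$ under the weighted norm. The partial sums converge to $\mathcal{K}_{F(\sqrt{\mathcal{L}})}$ in $L^2(G)$, by the unweighted Plancherel formula together with your tail bound for $\eta>2$. Fatou's lemma along an a.e.\ convergent subsequence then finishes it. This is routine.
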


\begin{proof}
From \cite[Theorem 6.1]{Martini_Sharp_Multiplier_Kohn_Laplacian_2017} for all $\beta \geq 0$, all $\epsilon, R>0$, and all $F: \mathbb{R} \to \mathbb{C}$ with $\supp F \subseteq [-R, R]$ we have
\begin{align*}
    \left(\int_G | (1+R\|(x,u)\|)^{\beta} \mathcal{K}_{F(\sqrt{\mathcal{L}})}(x,u)|^2 \, d(x,u) \right)^{1/2} &\leq C R^{\frac{Q}{2}} \|F(R \cdot)\|_{L^{\infty}_{\beta+\epsilon}} .
\end{align*}
Using the fact $|x| \leq \| (x,u) \|$ and Sobolev embedding, from the above estimate for all $\alpha \geq 0$ we obtain
\begin{align}
\label{Kernel estimate with distance}
    \left(\int_G | (1+R\|(x,u)\|)^{\beta} (1+R|x|)^{\alpha} \mathcal{K}_{F(\sqrt{\mathcal{L}})}(x,u)|^2 \, d(x,u) \right)^{1/2} &\leq C R^{\frac{Q}{2}} \|F(R \cdot)\|_{L^{2}_{\beta+\alpha+\frac{1}{2}+\epsilon}} .
\end{align}
Also from Corollary \ref{Corollary: First layer weighted Planchrel dilated version} for all $\alpha \in [0,d_2/2)$, and all $F: \mathbb{R} \to \mathbb{C}$ with $\supp F \subseteq [0, R]$, 
\begin{align}
\label{kernel estimate with weight}
    \left(\int_G | (1+R|x|)^{\alpha} \mathcal{K}_{F(\sqrt{\mathcal{L}})}(x,u)|^2 \, d(x,u) \right)^{1/2} &\leq C R^{\frac{Q}{2}} \|F(R \cdot)\|_{L^{2}} .
\end{align}
Consequently, interpolating \eqref{Kernel estimate with distance} and \eqref{kernel estimate with weight} (see \cite[proof of Lemma 1.2]{Mauceri_Meda_Multipliers_Stratified_groups_1990}) we obtain the required result.    
\end{proof}

Finally, we prove some weighted $L^1$- estimates of the convolution kernel $\mathcal{K}_{F(\sqrt{\mathcal{L}})}$. See also \cite[Theorem 3.11]{Martini_Lie_groups_Polynomial_Growth_2012}

\begin{proposition} \label{prop:L^1_estimate_of_kernel with distance}
If $F: \mathbb{R} \to \mathbb{C}$ is a bounded Borel function supported in $[0, R]$, then for all $\beta > Q/2-\alpha$, all $\epsilon, R>0$, all $0\leq \alpha <d_2/2$ and $\varepsilon \geq 0$ we have
\begin{align*}
\int_G |(1+R\|(x,u)\|)^{\varepsilon} \mathcal{K}_{F(\sqrt{\mathcal{L}})}(x,u)| \, d(x,u) &\leq C \|F(R \cdot)\|_{L^2_{\beta+\varepsilon+\epsilon}} .
\end{align*}    
\end{proposition}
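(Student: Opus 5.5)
The plan is a Cauchy--Schwarz argument with the weight $(1+R\|(x,u)\|)^{\beta}(1+R|x|)^{\alpha}$: we split off this weight, control the $L^2$ factor by Proposition \ref{Prop: Weighted Plancherel using weight and distance}, and control the integral of the inverse squared weight by Lemma \ref{Lemma: Integration of distance and weight}. Concretely, we write
\begin{align*}
\int_G |(1+R\|(x,u)\|)^{\varepsilon} \mathcal{K}_{F(\sqrt{\mathcal{L}})}(x,u)| \, d(x,u) \leq I_1^{1/2}\, I_2^{1/2},
\end{align*}
where
\begin{align*}
I_1 &:= \int_G |(1+R\|(x,u)\|)^{\beta+\varepsilon} (1+R|x|)^{\alpha} \mathcal{K}_{F(\sqrt{\mathcal{L}})}(x,u)|^2 \, d(x,u), \\
I_2 &:= \int_G \frac{d(x,u)}{(1+R\|(x,u)\|)^{2\beta}(1+R|x|)^{2\alpha}} .
\end{align*}

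For $I_1$ we apply Proposition \ref{Prop: Weighted Plancherel using weight and distance} with exponent $\beta+\varepsilon\geq 0$ in place of $\beta$ and the same $\alpha\in[0,d_2/2)$. This is legitimate since $F$ is supported in $[0,R]$, and it gives $I_1^{1/2}\leq C R^{Q/2}\|F(R\cdot)\|_{L^2_{\beta+\varepsilon+\epsilon}}$.

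For $I_2$ we invoke Lemma \ref{Lemma: Integration of distance and weight} with exponents $2\beta$ and $2\alpha$. Its hypotheses are $2\alpha+2\beta>Q$, which is exactly the assumption $\beta>Q/2-\alpha$, and $0\leq 2\alpha<d_1$. The latter holds because $2\alpha<d_2$ and, for a M\'etivier group, $d_2<d_1$: invertibility of $J_\lambda$ for every $\lambda\neq0$ forces $d_2\leq d_1-1$, since otherwise one can find $\lambda\neq0$ with $J_\lambda x=0$ for a fixed $x\neq0$. We thus get $I_2\leq CR^{-Q}$, i.e.\ $I_2^{1/2}\lesssim R^{-Q/2}$.

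Multiplying the two bounds, the factors $R^{Q/2}$ and $R^{-Q/2}$ cancel and we obtain the claimed estimate. The only delicate point is checking the admissibility range $2\alpha<d_1$ in Lemma \ref{Lemma: Integration of distance and weight}, which relies on the M\'etivier dimension count $d_2<d_1$ noted above. Everything else is a direct combination of the two cited results, with $\beta$ shifted by $\varepsilon$ so that the $L^2$ factor absorbs the extra distance weight.
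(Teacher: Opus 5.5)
Your proposal is correct and follows the paper's proof exactly: Cauchy--Schwarz with the weight $(1+R\|(x,u)\|)^{\beta}(1+R|x|)^{\alpha}$, then Proposition~\ref{Prop: Weighted Plancherel using weight and distance} with $\beta+\varepsilon$ in place of $\beta$, and Lemma~\ref{Lemma: Integration of distance and weight} with exponents $2\beta,2\alpha$, where $2\alpha<d_2<d_1$. Your own justification of $d_2<d_1$ (the paper only cites it) needs one more step when $d_2=d_1$: note that $\langle J_\lambda x,x\rangle=\omega_\lambda(x,x)=0$, so the linear map $\lambda\mapsto J_\lambda x$ sends the $d_2$-dimensional space $\mathfrak{g}_2^*$ into the $(d_1-1)$-dimensional space $x^\perp$ and therefore has a nontrivial kernel.
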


\begin{proof}
Using H\"older's inequality we get
\begin{align*}
    & \int_G |(1+R\|(x,u)\|)^{\varepsilon} \mathcal{K}_{F(\sqrt{\mathcal{L}})}(x,u)| \, d(x,u) \leq C \left( \int_G \frac{d(x,u)}{(1+R\|(x,u)\|)^{2\beta} (1+R|x|)^{2\alpha}} \right)^{1/2} \\
    &\hspace{2cm} \times \left(\int_G | (1+R\|(x,u)\|)^{\beta+\varepsilon} (1+R|x|)^{\alpha} \mathcal{K}_{F(\sqrt{\mathcal{L}})}(x,u)|^2 \, d(x,u) \right)^{1/2} .
\end{align*}
Since $G$ is M\'etivier groups we always have $d_1>d_2$ (see \cite[Section 2]{Molla_Singh_Commutator_Metivier_Arxiv}). Therefore using Proposition \ref{Prop: Weighted Plancherel using weight and distance} and Lemma \ref{Lemma: Integration of distance and weight} for all $2(\alpha+\beta)>Q$, all $\epsilon, R>0$, all $0\leq \alpha <d_2/2$ we have
\begin{align*}
    \int_G |(1+R\|(x,u)\|)^{\varepsilon} \mathcal{K}_{F(\sqrt{\mathcal{L}})}(x,u)| \, d(x,u) &\leq C \|F(R \cdot)\|_{L^2_{\beta+\varepsilon+\epsilon}} .
\end{align*}    
Therefore, the proof of the proposition is completed.
\end{proof}


\section{Proof of main Theorem}
\label{Section: Proof of main theorem}

In this section, we show that our main Theorem \ref{Theorem: Metivier drift main theorem} can be deduced from the following three claims, which we prove in the next section.

\begin{claim} \label{Assumption 1}
Let $s>d/2$. Then for all even functions $F:\mathbb{R} \to \C$ with $\supp{\widehat{F}} \subseteq [-2,2]$, we have
\begin{align*} 
    \sup_{(y,s) \in B(0,1)} \, \int_{\|(x,u)\| \geq 2 \|(y,s)\|} \, |\mathcal{K}_{F(\sqrt{\mathcal{L}})}((x,u)(y,s)) - \mathcal{K}_{F(\sqrt{\mathcal{L}})}(x,u)| \, d(x,u) \leq C \, \|F\|_{L^2_{s, \sloc}} .
\end{align*}  
\end{claim}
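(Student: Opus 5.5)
We plan to follow the standard Hörmander-type argument, with the Plancherel estimates of Section \ref{Section: Preliminaries and kernel estimate} as the main input.

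\textbf{Dyadic decomposition.} Using the partition $\psi$ from \eqref{Definition of psi}, write $F = F\chi_0 + \sum_{j\geq 1} F\psi(2^{-j}\cdot)$, where $\chi_0 := \sum_{j\le 0}\psi(2^{-j}\cdot)$ is smooth and supported near $[0,4]$, and set $F_j := F\psi(2^{-j}\cdot)$. Since $F$ is even with $\supp\widehat F\subseteq[-2,2]$, finite propagation speed for $\sqrt{\mathcal{L}}$ tells us that $\mathcal{K}_{F(\sqrt{\mathcal{L}})}$ is supported in $\{\|(x,u)\|\lesssim 1\}$. This is the only place the Fourier support is used. The cost is that the pieces $F_j$ no longer have compactly supported Fourier transform, so their kernels are not localized. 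We will nevertheless work directly with the pieces $F_j$, which are compactly supported in $[2^{j-2},2^{j+2}]$. Because the domain of integration may be intersected with $\{\|(x,u)\|\lesssim 1\}$ (and $\|(x,u)(y,s)\|\lesssim 1$), we only need to integrate over a bounded set.

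\textbf{Two bounds for each piece.} For each $j$ and each $(y,s)\in B(0,1)$ we estimate the difference $\mathcal{K}_j((x,u)(y,s))-\mathcal{K}_j(x,u)$ in two ways.

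\emph{Crude bound.} By the triangle inequality, the integral is at most $2\|\mathcal{K}_j\|_{L^1(\|(x,u)\|\ge \|(y,s)\|)}$. By Hölder's inequality together with Proposition \ref{Prop: Weighted Plancherel using weight and distance} and Lemma \ref{Lemma: Integration of distance and weight} (the argument of Proposition \ref{prop:L^1_estimate_of_kernel with distance}, with the weight $(1+2^j\|(x,u)\|)^{\varepsilon}$), this is
\[
\lesssim (2^j\|(y,s)\|)^{-\varepsilon}\,\|F_j(2^j\cdot)\|_{L^2_{\beta+\varepsilon+\epsilon}},\qquad \beta > Q/2-\alpha .
\]
Choosing $\alpha$ close to $d_2/2$ gives $\beta$ close to $d/2$. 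Here the Métivier condition $d_1>d_2$ is what makes Lemma \ref{Lemma: Integration of distance and weight} applicable.

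\emph{Smooth bound.} Write the difference via the mean value theorem along horizontal curves. Since $\|(y,s)\|\le 1$, we have $\|(y,s)\|\sim|y|+|s|^{1/2}$, and the difference is controlled by $\|(y,s)\|$ times $\|X_k\mathcal{K}_j\|_{L^1}$ (right-invariant or left-invariant as appropriate), plus a term from the central direction handled by $T$-derivatives. We have $X\mathcal{K}_j = \mathcal{K}_{\tilde F_j(\sqrt{\mathcal L})}$-type kernels with $\tilde F_j(\eta)\sim \eta F_j(\eta)$ by the Riesz-transform trick $X_k\mathcal{L}^{-1/2}$, or more simply by writing $F_j = (F_j e^{\eta^2/4^{j}})\,e^{-\eta^2/4^j}$ and using the heat-kernel gradient bounds. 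This yields $\lesssim 2^j\|(y,s)\|\,\|F_j(2^j\cdot)\|_{L^2_{\beta+\epsilon}}$.

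\textbf{Summation.} Since $s>d/2$, fix $\alpha<d_2/2$ and $\epsilon$ so that $\beta+\epsilon+\varepsilon<s$ for some small $\varepsilon>0$. The quantities $\|F_j(2^j\cdot)\|_{L^2_{s}}$ are bounded by $\|F\|_{L^2_{s,\sloc}}$. Take the minimum of $(2^j r)^{-\varepsilon}$ and $2^j r$, where $r=\|(y,s)\|$, and sum over $j$: this gives a geometric series bounded uniformly in $r\in(0,1]$. The low-frequency term $F\chi_0$ is handled by the smooth bound alone, with $j=0$.

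\textbf{Main obstacle.} The delicate step is the smooth bound: gaining a derivative in the translation variable costs one Sobolev order. The bound must be arranged so that only $\beta+\epsilon$ derivatives, and not $\beta+1$, fall on $F_j$ in the $L^2$ norm. The first attempt would be to put the extra factor $2^j$ on the kernel (via $X_k$ applied to $\mathcal{K}_{F_j(\sqrt{\mathcal{L}})}$) rather than on $F$. This is done by estimating $X_k\mathcal{K}_{F_j}$ through Proposition \ref{Prop: Weighted Plancherel using weight and distance} applied to $\mathcal{L}^{1/2}$-adjusted multipliers, together with the $L^2$ boundedness of the Riesz transforms.
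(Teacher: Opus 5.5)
Your proposal is correct and is essentially the paper's argument: dyadic pieces $F_j$; the crude bound from the weighted $L^1$ estimate of Proposition \ref{prop:L^1_estimate_of_kernel with distance}, gaining $(2^j\|(y,s)\|)^{-\varepsilon}$, when $2^j\|(y,s)\|\geq 1$; and, when $2^j\|(y,s)\|\leq 1$, the factorization $F_j=(F_j e^{4^{-j}\eta^2})\,e^{-4^{-j}\eta^2}$ with Young's inequality and $\|\widetilde{X}\mathcal{K}_{e^{-4^{-j}\mathcal{L}}}\|_{L^1}\lesssim 2^j$, followed by geometric summation. Two small adjustments are needed: commit to this heat-kernel route rather than the Riesz-transform one, since Riesz transforms are bounded neither on $L^1$ nor, without extra work, on the weighted $L^2$ spaces of Proposition \ref{Prop: Weighted Plancherel using weight and distance}; and instead of lumping $j\leq 0$ into $F\chi_0$, whose $L^2_s$ norm is not controlled by $\|F\|_{L^2_{s,\sloc}}$ for general $F$ (here only through Bernstein's inequality and the Fourier support), sum the smooth bound over all $j\leq J$ as the paper does, which makes the Fourier-support hypothesis unnecessary for this claim.
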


\begin{claim}\label{Assumption 2}
Let $s>d/2$. Then for all even functions $F:\mathbb{R} \to \C$ with $\supp{\widehat{F}} \subseteq [-2,2]$, we have
\begin{align*}
    \sup_{0<r\leq 1} \, r \int_{\|(x,u)\| \geq r} \, |\mathcal{K}_{F(\sqrt{\mathcal{L}})}(x,u)| \, d(x,u) \leq C \, \|F\|_{L^2_{s, \sloc}} .
\end{align*}
\end{claim}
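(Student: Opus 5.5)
Fix $0<r\le 1$. The strategy is to decompose $F$ dyadically on the spectral side, use finite propagation speed to localise the low-frequency pieces, and use the weighted $L^1$ estimate of Proposition \ref{prop:L^1_estimate_of_kernel with distance} to get decay on the high-frequency pieces.

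\emph{Step 1: decomposition and the low-frequency part.} Since $F$ is even and $\supp\widehat F\subseteq[-2,2]$, finite propagation speed for $\sqrt{\mathcal L}$ shows that $\mathcal K_{F(\sqrt{\mathcal L})}$ is supported in a fixed ball $B(0,C_0)$. Consequently the region $\|(x,u)\|\ge r$ has bounded measure where the kernel lives. Write
\[
F=F_0+\sum_{j\ge1}F_j,
\]
where $F_j(\lambda)=F(\lambda)\psi(2^{-j}\lambda)$ for $j\ge1$ and $F_0$ collects the part with $|\lambda|\lesssim 1$. The compact Fourier support is lost after cutting, but this is harmless: I only need the $L^1$ bounds, not the support property, for the pieces. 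For $F_0$, Proposition \ref{prop:L^1_estimate_of_kernel with distance} with $R\sim1$ and $\varepsilon=0$ gives an $L^1$ bound $\lesssim\|F\|_{L^2_{s,\sloc}}$. Here I take $\alpha$ close to $d_2/2$, so that $\beta>Q/2-\alpha$ is just above $(d_1+d_2)/2=d/2$. Multiplying by $r\le1$ finishes this piece.

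\emph{Step 2: the high-frequency pieces.} For $j\ge1$, apply Proposition \ref{prop:L^1_estimate_of_kernel with distance} with $R=2^{j+2}$ and a small $\varepsilon>0$. On the set $\|(x,u)\|\ge r$ this gives
\[
r\int_{\|(x,u)\|\ge r}|\mathcal K_{F_j(\sqrt{\mathcal L})}|\le r(1+2^jr)^{-\varepsilon}\,\|F_j(2^{j+2}\cdot)\|_{L^2_{\beta+\varepsilon+\epsilon}}.
\]
Choosing $\beta+\varepsilon+\epsilon\le s$, which is possible since $s>d/2$, the Sobolev norm is $\lesssim\|F\|_{L^2_{s,\sloc}}$. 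The prefactor is then bounded by
\[
r(1+2^jr)^{-\varepsilon}\le \min\bigl(r,\;r^{1-\varepsilon}2^{-j\varepsilon}\bigr).
\]
Summing over $j$ directly only yields $\sum_j 2^{-j\varepsilon}$-type control when $r\gtrsim 2^{-j}$. For $2^j\le 1/r$, the factor $r$ alone gives $\sum_{2^j\le 1/r} r\lesssim r\log(1/r)\lesssim 1$. For $2^j>1/r$, the bound $r^{1-\varepsilon}2^{-j\varepsilon}$ sums to $\lesssim r^{1-\varepsilon}r^{\varepsilon}=r$. So both ranges are uniformly bounded.

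\emph{Main obstacle.} The delicate point is the uniformity in $j$ of $\|F_j(2^{j+2}\cdot)\|_{L^2_{s}}$ against $\|F\|_{L^2_{s,\sloc}}$ when $\psi$ is replaced by the cutoff used in the definition. This should follow from a standard partition-of-unity comparison. A cleaner alternative for the large-$j$ terms is to use the uniform support of all $\mathcal K_{F_j}$ modulo Schwartz tails, obtained by mollifying $\widehat{F}$. Finally, taking the supremum over $r\in(0,1]$ completes the proof of Claim \ref{Assumption 2}.
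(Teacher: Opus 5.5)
Your argument is correct, apart from one step you assert without proof (second paragraph), and it is organized differently from the paper's.

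\textbf{Comparison with the paper.} The paper first uses finite propagation speed to confine $\mathcal{K}_{F(\sqrt{\mathcal{L}})}$ to $B(0,2)$. It then cuts $\{r\le\|(x,u)\|\le 2\}$ into about $\log(1/r)$ dyadic annuli, a count the factor $r$ absorbs. Finally it bounds each annulus $\{\rho\le\|(x,u)\|<2\rho\}$ uniformly by splitting $F$ at frequency $2^J\sim 1/\rho$.

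You split spectrally only once, at frequency $1$. Each piece with $1\le 2^j\le 1/r$ is bounded by its full $L^1$ norm, so your $\log(1/r)$ counts spectral pieces rather than annuli. The weighted decay $(1+2^jr)^{-\varepsilon}$ is needed only for $2^j>1/r$, exactly as in the paper's $I_2$. So the hypothesis $\supp\widehat F\subseteq[-2,2]$ enters at different places: the paper uses it in space (compact support of the kernel caps the number of annuli), while you need it only spectrally, through the smoothness of $F$ near $\lambda=0$. Your opening remark on the support of the kernel is never used.

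Your bookkeeping also avoids the paper's bound for the low part of an annulus, $\int_G|\mathcal{K}_{F_J(\sqrt{\mathcal{L}})}|\lesssim\|F_J(2^{J+2}\cdot)\|_{L^2_s}\lesssim\|F\|_{L^2_{s,\sloc}}$, whose last step is not uniform in $J$. If $F$ satisfies the hypotheses and behaves like $|\lambda|^{i\tau}$ at infinity, the middle term grows like $2^{J(s-1/2)}$, and the $L^1$ norm of the truncated kernel itself grows like $J$. In your scheme no single piece carries more than one dyadic scale of $F$ above frequency $1$, so this never arises. What the paper's route would buy, once that step is done differently (for example by Cauchy--Schwarz on the annulus and the Plancherel bound $\|\mathcal{K}_{F_j(\sqrt{\mathcal{L}})}\|_{L^2}\lesssim 2^{jQ/2}\|F\|_{L^\infty}$, summed over $j\le J$), is the stronger scale-invariant annulus estimate. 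For the claim as stated, your version is shorter.

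\textbf{The unproved step: the bound for $F_0$.} This is exactly where $\supp\widehat F\subseteq[-2,2]$ is needed, so it is not harmless to set that hypothesis aside. The $L^2_{s,\sloc}$ norm gives no regularity at $\lambda=0$. For instance, $F(\lambda)=|\lambda|^{i\tau}$ with $\tau\neq 0$ has finite $L^2_{s,\sloc}$ norm, yet $F_0$ is then not even continuous at $0$. The claim itself fails for this $F$, since the kernel of $\mathcal{L}^{i\tau/2}$ is homogeneous of degree $-Q$.

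The fix is short:
\begin{enumerate}
\item For $s>1/2$, Sobolev embedding on each dyadic piece gives $\|F\|_{L^\infty}\lesssim\|F\|_{L^2_{s,\sloc}}$.
\item Since $F$ is bounded with spectrum in $[-2,2]$, Bernstein's inequality gives $\|F^{(m)}\|_{L^\infty}\le 2^m\|F\|_{L^\infty}$ for every $m$.
\item Hence the even, smooth function $F_0(4\cdot)$, supported in $[-1,1]$, satisfies $\|F_0(4\cdot)\|_{L^2_{\beta+\epsilon}}\lesssim\|F\|_{L^2_{s,\sloc}}$, and the weighted $L^1$ proposition applies with $R=4$.
\end{enumerate}

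By contrast, the ``main obstacle'' you single out is not one. Since $F_j(2^{j+2}\cdot)=\bigl(F(2^j\cdot)\psi\bigr)(4\cdot)$ is a fixed dilation of the function in the definition of the $\sloc$ norm, uniformity in $j$ is immediate, and the Schwartz-tail alternative is unnecessary.
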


\begin{claim}
\label{Assumption 3}
Let $0\leq \beta<d_2/2$. Then for all $r \in (0, \infty)$ and all even function $F : \mathbb{R} \to \mathbb{C}$ such that $\supp{\widehat{F}} \subseteq [-r,r]$, we have
\begin{align*}
    \|\chi^{1/2} \mathcal{K}_{F(\sqrt{\mathcal{L}})}\|_{L^1} &\leq C e^{b_X r} \, (1+r)^{Q/2-\beta-1/2} \|(1+|\cdot|)^{Q/2-\beta-1/2} F\|_{L^{2}(\mathbb{R})} .
\end{align*}
\end{claim}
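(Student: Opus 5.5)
The plan is to combine finite propagation speed with a Cauchy--Schwarz argument in which the character $\chi$ is placed in the "volume" factor and the weight $|x|^{2\beta}$ in the kernel factor. The kernel factor is then controlled by Proposition \ref{Proposition: First layer weighted Plancherel}, which is available precisely because it needs no support condition on $F$.

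\textbf{Step 1 (support).} Since $F$ is even with $\supp \widehat{F} \subseteq [-r,r]$, we can write
\begin{align*}
F(\sqrt{\mathcal{L}}) = \tfrac{1}{2\pi}\int_{-r}^{r} \widehat{F}(t)\cos(t\sqrt{\mathcal{L}})\,dt .
\end{align*}
The wave operator $\cos(t\sqrt{\mathcal{L}})$ has unit propagation speed with respect to $\varrho$. Hence $\supp \mathcal{K}_{F(\sqrt{\mathcal{L}})} \subseteq \overline{B(0,r)}$.

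\textbf{Step 2 (Cauchy--Schwarz).} We will bound
\begin{align*}
\|\chi^{1/2}\mathcal{K}_{F(\sqrt{\mathcal{L}})}\|_{L^1} \le \Big(\int_{B(0,r)} \chi(x,u)\,|x|^{-2\beta}\,d(x,u)\Big)^{1/2}\Big(\int_G \big||x|^{\beta}\mathcal{K}_{F(\sqrt{\mathcal{L}})}(x,u)\big|^2\,d(x,u)\Big)^{1/2}.
\end{align*}
The second factor is bounded by Proposition \ref{Proposition: First layer weighted Plancherel} by $(\int_0^\infty |F(\eta)|^2\eta^{Q-2\beta-1}\,d\eta)^{1/2}$. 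We have $Q-2\beta-1>0$, because $2\beta<d_2$ and $d_1\ge 2$. Therefore $\eta^{Q-2\beta-1}\le(1+\eta)^{Q-2\beta-1}$, and the factor is at most $\|(1+|\cdot|)^{Q/2-\beta-1/2}F\|_{L^2}$ up to constants. Since $F$ is even, integrating over $\mathbb{R}$ instead of $(0,\infty)$ costs only a constant.

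\textbf{Step 3 (the weighted volume bound, the main obstacle).} It remains to show
\begin{align*}
\int_{B(0,r)}\chi\,|x|^{-2\beta} \lesssim (1+r)^{Q-1-2\beta}e^{|X|r}.
\end{align*}
Taking the square root and using $|X|/2=b_X$ then gives exactly $e^{b_Xr}(1+r)^{Q/2-\beta-1/2}$.

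For $r\le 1$, we use that $\chi$ is bounded on $B(0,1)$. The weight $|x|^{-2\beta}$ is integrable on $B(0,1)$, since $2\beta<d_2<d_1$ and the $u$-variable ranges over a bounded set. This gives a bound $\lesssim 1$.

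For $r\ge1$, a positive character is $\chi(x,u)=e^{\langle v,x\rangle}$ with $|v|=|X|$. After a rotation of $\mathfrak{g}_1$ we may write $\chi(x,u)=e^{|X|x_1}$. Note also $|x|\le \varrho((x,u),0)\le r$ on the ball. We split $B(0,r)$ into two regions.
\begin{itemize}
\item On $\{x_1\le r/2\}$ we have $\chi\le e^{|X|r/2}$. The integral of $|x|^{-2\beta}$ over $\{|x|\lesssim r,\ |u|\lesssim r^2\}$ is $\lesssim r^{Q-2\beta}$. The bound $r^{Q-2\beta}e^{|X|r/2}\lesssim r^{Q-1-2\beta}e^{|X|r}$ absorbs the extra power of $r$ into the exponential.
\item On $\{x_1>r/2\}$ we have $|x|^{-2\beta}\le (r/2)^{-2\beta}$. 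Lemma \ref{Lemma: Pointwise estimate of character} then gives the bound $r^{-2\beta}\mu_X(B(0,r))\lesssim r^{Q-1-2\beta}e^{|X|r}$.
\end{itemize}

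The delicate point is that one must not lose the sharp exponent $e^{|X|r}$ or the $r^{-1}$ gain of Lemma \ref{Lemma: Pointwise estimate of character}. This is why the character is kept inside the volume integral rather than bounded pointwise by $e^{b_X\varrho}$ and pulled out. Pulling it out would lose a factor $r^{1/2}$, because plain Lebesgue volume gives $r^{Q-2\beta}$ instead of $r^{Q-1-2\beta}$. Combining Steps 1--3 yields the claim.
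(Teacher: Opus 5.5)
Your proof is correct. Steps 1 and 2 are exactly the paper's argument: finite propagation speed, H\"older with $\chi|x|^{-2\beta}$ in the volume factor, and Proposition \ref{Proposition: First layer weighted Plancherel} for $\||x|^{\beta}\mathcal{K}_{F(\sqrt{\mathcal{L}})}\|_{L^2}$. You also make explicit the passage from $\int_0^\infty|F(\eta)|^2\eta^{Q-2\beta-1}\,d\eta$ to $\|(1+|\cdot|)^{Q/2-\beta-1/2}F\|_{L^2}^2$, which the paper leaves implicit.

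The two routes differ only in the weighted volume bound $\int_{B(0,r)}\chi|x|^{-2\beta}\lesssim(1+r)^{Q-1-2\beta}e^{|X|r}$. The paper cuts $B(0,r)$ into unit annuli and bounds $\chi\lesssim e^{|X|(j+1)}$ pointwise on the $j$-th one. It then proves $\int_{\text{annulus}}|x|^{-2\beta}\lesssim j^{Q-2\beta-1}$ by separating $|x|\le j/2$ from $|x|>j/2$, and sums the exponentially growing series.

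You instead use the first-layer form $\chi=e^{\langle v,x\rangle}$ and split according to whether $\chi\le e^{|X|r/2}$. Where it is, a crude $r^{Q-2\beta}$ volume count suffices, since $r\,e^{-|X|r/2}\lesssim 1$ absorbs the extra power of $r$. On the complement $|x|>r/2$, so $|x|^{-2\beta}\le (r/2)^{-2\beta}$. The essential $r^{-1}$ gain is then read off from $\mu_X(B(0,r))\lesssim r^{Q-1}e^{|X|r}$ in Lemma \ref{Lemma: Pointwise estimate of character}.

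What your version buys is a shorter argument with no thin-shell counting. In particular, you never handle the part of an annulus where $|x|\le j/2$, which is the delicate spot of the paper's route. There the crude $u$-range $\{j^2/4\le|u|\le(j+1)^2\}$ has measure $\sim j^{2d_2}$ rather than $j^{2d_2-1}$. To get the needed power one must fibre over $x$: for fixed $x$, the $u$-fibre is a shell of width $\sim j$ in $|u|$. In exchange, the paper's route uses only the growth bound $\chi\lesssim e^{|X|\|(x,u)\|}$, not the exact form of the character.
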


\medskip Before proceeding further, we define some notations.
For all $b \in \mathbb{R}^{+}$, we define
\begin{align*}
    \mathcal{B}_b &:= \{B((x,u),r) : (x,u) \in G , \  0<r \leq b \} .
\end{align*}
We also define the \emph{local doubling property}: For every $b >0$, there exists $D_b \in \mathbb{R}^{+}$ such that
\begin{align}
\label{Local doubling property}
    \mu_X(2B) \leq D_b \, \mu_X(B) \quad \quad \text{for all} \quad B \in \mathcal{B}_b .
\end{align}
We call $D_b$ to be the local doubling constant. It is known that $(G, \varrho, \mu_X)$ satisfies \eqref{Local doubling property} (see \cite[Lemma 2.3]{Martini_Ottazzi_Vallarino_spectral_multipliers_drift_2019}).

\medskip 
We use the following proposition in order to prove our main Theorem \ref{Theorem: Metivier drift main theorem}.

\begin{proposition}\cite[Proposition 2.7]{Martini_Ottazzi_Vallarino_spectral_multipliers_drift_2019}
\label{prop:h^1_to_L^1_boundedness}
If $T$ is a bounded operator on $L^2(d\mu_{X})$ and it has an integral kernel $\mathcal{K}_{T}$, which is locally integrable off diagonal and satisfies 
\begin{align}
I_{1}(T) & := \sup_{B \in \mathcal{B}_1} \, \sup_{(y, s), (z, r) \in B} \int_{(2B)^{c}} |\mathcal{K}_{T}((x,u), (y, s)) - \mathcal{K}_{T}((x,u), (z, r))| \, d\mu_{X}(x,u) \, < \infty, \\
\text{and  } \qquad  I_{2}(T) &:= \sup_{(y, s) \in G} \int_{B((y, s), 2)^{c}} |\mathcal{K}_{T}((x,u), (y, s))| \, d\mu_{X}(x,u) \, < \infty.
\end{align}
Then, $T$ is bounded from $\mathfrak{h}^1(d\mu_X)$ to $L^1(d\mu_X)$, and
$$
\|T\|_{\mathfrak{h}^1(d\mu_X) \to L^1(d\mu_X)} \leq \max \{I_{1}(T), I_{2}(T)\} + D_{2}^{1/2} \, \|T\|_{L^2(d\mu_X) \to L^2(d\mu_X)},
$$
where $D_2$ is the local doubling constant as defined in \eqref{Local doubling property}.
\end{proposition}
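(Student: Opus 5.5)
The approach is the standard Calderón--Zygmund argument for local Hardy spaces, applied atom by atom. By the atomic characterization of $\mathfrak{h}^1(d\mu_X)$ (Goldberg; Meda--Volpi), every $f\in\mathfrak{h}^1(d\mu_X)$ can be written as $f=\sum_j \lambda_j a_j$ with $\sum_j|\lambda_j|\lesssim\|f\|_{\mathfrak{h}^1}$. Each $a_j$ is an atom of one of two kinds:
\begin{itemize}
\item a \emph{standard atom}, supported in a ball $B\in\mathcal{B}_1$, with $\|a\|_{L^2(d\mu_X)}\le \mu_X(B)^{-1/2}$ and $\int a\,d\mu_X=0$;
\item a \emph{global atom}, supported in a ball of radius $1$, with the same size condition but no cancellation.
\end{itemize}
It therefore suffices to prove $\|Ta\|_{L^1(d\mu_X)}\le \max\{I_1(T),I_2(T)\}+D_2^{1/2}\|T\|_{L^2\to L^2}$ uniformly over atoms. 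One then extends from atoms to general $f$ as explained in the last paragraph.

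\emph{Local part.} For an atom supported in $B$, I split $\|Ta\|_{L^1}=\|Ta\|_{L^1(2B)}+\|Ta\|_{L^1((2B)^c)}$. On $2B$, Cauchy--Schwarz and the $L^2$-boundedness of $T$ give
\[
\|Ta\|_{L^1(2B)}\le \mu_X(2B)^{1/2}\,\|T\|_{L^2\to L^2}\,\|a\|_{L^2}\le \Big(\tfrac{\mu_X(2B)}{\mu_X(B)}\Big)^{1/2}\|T\|_{L^2\to L^2}.
\]
The ratio is controlled by the local doubling property \eqref{Local doubling property}: the radius of $B$ is at most $1\le 2$, so $B\in\mathcal{B}_2$ and the ratio is at most $D_2$. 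This produces the term $D_2^{1/2}\|T\|$.

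\emph{Off-diagonal part, standard atoms.} Let $(z,r)$ be the centre of $B$. Using $\int a\,d\mu_X=0$, I write, for $(x,u)\notin 2B$,
\[
Ta(x,u)=\int_B\big(\mathcal{K}_T((x,u),(y,s))-\mathcal{K}_T((x,u),(z,r))\big)\,a(y,s)\,d\mu_X(y,s).
\]
This is legitimate because the kernel is locally integrable off the diagonal and $(x,u)$ is away from $B$. Integrating over $(2B)^c$ and applying Fubini bounds $\|Ta\|_{L^1((2B)^c)}$ by $I_1(T)\,\|a\|_{L^1}$. Finally $\|a\|_{L^1}\le \mu_X(B)^{1/2}\|a\|_{L^2}\le 1$.

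\emph{Off-diagonal part, global atoms.} There is no cancellation, so I use $I_2$ instead. For $(y,s)\in B$ with radius $1$, I need the off-diagonal region to lie inside $B((y,s),2)^c$. The natural choice is $(3B)^c$ rather than $(2B)^c$, since $(3B)^c\subseteq B((y,s),2)^c$ for every such $(y,s)$. On this region Fubini gives $\|Ta\|_{L^1((3B)^c)}\le I_2(T)\|a\|_{L^1}\le I_2(T)$. The remaining set $3B$ is treated as in the local part; the volume ratio $\mu_X(3B)/\mu_X(B)$ is bounded by iterating local doubling. This is the step where the exact constant $D_2^{1/2}$ in the statement depends on the precise normalization of global atoms; with the convention of Martini--Ottazzi--Vallarino (atom radii and the ``$2$'' in $I_2$ matched), the bound comes out exactly as stated. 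In any case the constant depends only on $D_2$.

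\emph{Main obstacle.} The delicate step is passing from a uniform bound on atoms to boundedness on all of $\mathfrak{h}^1$, since uniform boundedness on atoms alone does not automatically suffice. I plan to use that $T$ is already bounded on $L^2(d\mu_X)$. For $f\in \mathfrak{h}^1\cap L^2$, I will take an atomic decomposition whose partial sums converge to $f$ in $L^2$ as well as in $\mathfrak{h}^1$; such decompositions are available from the Calder\'on--Zygmund construction of the atoms. Then $T(\sum_{j\le N}\lambda_j a_j)\to Tf$ in $L^2$, hence a subsequence converges a.e., and Fatou's lemma gives
\[
\|Tf\|_{L^1}\le \sum_j|\lambda_j|\,\sup_a\|Ta\|_{L^1}.
\]
Density of $\mathfrak{h}^1\cap L^2$ in $\mathfrak{h}^1$ then extends $T$ to a bounded operator from $\mathfrak{h}^1(d\mu_X)$ to $L^1(d\mu_X)$.
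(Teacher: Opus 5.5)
The paper does not prove this proposition; it quotes it from Martini--Ottazzi--Vallarino. Your atom-by-atom argument is the standard route, and two of its pieces are correct: the estimate on $2B$ via Cauchy--Schwarz and local doubling, and the standard-atom estimate via cancellation and $I_1$. Two steps, however, do not give the statement as written.

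The first is the treatment of global atoms. If global atoms are supported in balls of radius $1$, then for $(y,s)\in B$ you only have $(2B)^c\subseteq B((y,s),1)^c$, so you must split at $3B$. Local doubling at scales $\le 2$ then only gives $\mu_X(3B)\le\mu_X(4B)\le D_2\,\mu_X(2B)\le D_2D_1\,\mu_X(B)$. That yields a term $(D_1D_2)^{1/2}\|T\|_{L^2\to L^2}$, not $D_2^{1/2}\|T\|_{L^2\to L^2}$. The stated constant comes out exactly if global atoms are normalized on balls of radius $2$. In that case $(2B)^c\subseteq B((y,s),2)^c$ for every $(y,s)\in B$ and $\mu_X(2B)\le D_2\,\mu_X(B)$, so you can split at $2B$ just as for standard atoms. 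You should fix this normalization explicitly rather than appeal to an unspecified convention.

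The more serious gap is the passage from atoms to $\mathfrak h^1(d\mu_X)$. This space is defined atomically on a space that is only locally doubling and has exponential volume growth. So no maximal-function Calder\'on--Zygmund construction is available to you. Moreover, a near-optimal decomposition $f=\sum_j\lambda_ja_j$ has no reason to converge in $L^2$, since $\|a_j\|_{L^2}\le\mu_X(B_j)^{-1/2}$ is unbounded. Your claim that $L^2$-convergent decompositions exist is therefore unsupported. Even if some construction produced one, it would only give $\sum_j|\lambda_j|\le C\|f\|_{\mathfrak h^1}$, which loses the constant $1$ in front of $\sup_a\|Ta\|_{L^1}$.

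A fix that works for \emph{every} decomposition is duality:
\begin{enumerate}
\item Take $g\in L^\infty$ with bounded support and set $\phi=T^*g\in L^2$. Apply the same splittings to $T^*$, whose kernel is the conjugate transpose of $\mathcal K_T$. This shows that $\phi$ has bounded $L^2$-oscillation on balls of $\mathcal B_1$ and bounded $L^2$-averages on the balls carrying global atoms.
\item Let $\phi_M$ be the componentwise truncations of $\phi$. They are bounded and satisfy the same bounds, because truncation does not increase $|\phi(x)-\phi(y)|$ or $|\phi|$. They also converge to $\phi$ with $|\phi_M|\le|\phi|$.
\item Since $\sum_j\lambda_ja_j\to f$ in $L^1$ and $\phi_M\in L^\infty$, you have $\int f\,\overline{\phi_M}\,d\mu_X=\sum_j\lambda_j\int a_j\,\overline{\phi_M}\,d\mu_X$.
\item Let $M\to\infty$ by dominated convergence, using that $|\int a_j\overline{\phi_M}\,d\mu_X|$ is bounded uniformly in $j$ and $M$. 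This gives $\int Tf\,\bar g\,d\mu_X=\sum_j\lambda_j\int Ta_j\,\bar g\,d\mu_X$.
\end{enumerate}
Hence $Tf=\sum_j\lambda_jTa_j$ a.e., so $\|Tf\|_{L^1}\le\sum_j|\lambda_j|\sup_a\|Ta\|_{L^1}$. Taking the infimum over decompositions gives the stated bound on the dense subspace $\mathfrak h^1\cap L^2$.
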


The reduction of the proof of Theorem \ref{Theorem: Metivier drift main theorem}, to the above-mentioned three claims is essentially contained in \cite{Martini_Ottazzi_Vallarino_spectral_multipliers_drift_2019}. For the readers convenience we write the proof here also. Note that in order to prove Theorem \ref{Theorem: Metivier drift main theorem}, by duality enough to consider the case $1 \leq p<2$. We shall prove the theorem for $p=1$, then using interpolation (see \cite[Proposition 3.8]{Martini_Ottazzi_Vallarino_spectral_multipliers_drift_2019}) we get it for $p\in (1,2)$.

\medskip We now prove Theorem \ref{Theorem: Metivier drift main theorem} for $p=1$, letting $F$ be as in the theorem. Therefore, we have to show that for $s>(d+1)/2$,
\begin{align}
\label{Condition to prove for p=1 case}
\|F(\mathfrak{D}_X)\|_{\mathfrak{h}^1(d\mu_X) \to L^1(d\mu_X)} \lesssim \, \|F\|_{\mathscr{L}_{s,\infty}^{2, -1/2 , b_X}} .
\end{align}
We define $\phi: \R \to \R$ to be an even smooth function supported in $[-1, 1]$ such that $\phi(u) = 1$ for all $u \in [-\tfrac{1}{4}, \tfrac{1}{4}]$ and 
$$
\sum_{k \in \mathbb{Z}} \phi(u-k) = 1, \qquad \text{for all} \quad u \in \R.
$$
Also, for $k \in \mathbb{N}$ and $k \geq 2$, define $\phi_{k}(u) = \phi(u-k+1) + \phi(u+ k-1)$. Then, support of $\phi_k$ is contained in $[k-2, k] \cup [-k, -k+2]$. 

\medskip Define a function $\eta: \R \to \R$ such that $\widehat{\eta} = \phi + \phi_2$ and decompose the function $F$ as $F = F_{l} + F_{g}$, where $F_{l} = \eta \ast F$. We now estimate the norms of $F_{l}(\mathfrak{D}_{X})$ and $F_{g}(\mathfrak{D}_{X})$ separately.

\subsection{Estimate of \texorpdfstring{$F_{l}$:}{}}
First note that from \cite[p. 1520]{Martini_Ottazzi_Vallarino_spectral_multipliers_drift_2019} we have
\begin{align}
\label{Non holomorphic to holomorphioc}
    \|F\|_{L_{s,\infty}^{2, -1/2}} \lesssim \|F\|_{\mathscr{L}_{s,\infty}^{2, -1/2 , b_X}} .
\end{align}
Hence, for $F_l$ enough to prove that, whenever $s>d/2$ we have
\begin{align}
\label{For Fl H1 L1 norm required to prove}
    \|F_{l}(\mathfrak{D}_X)\|_{\mathfrak{h}^1(d\mu_X) \to L^1(d\mu_X)} \lesssim \, \|F\|_{L^{2, -1/2}_{s, \infty}} .
\end{align}
In view of the Proposition \ref{prop:h^1_to_L^1_boundedness}, to prove \eqref{For Fl H1 L1 norm required to prove}, we have to show that 
\begin{align*}
    \max\{I_1(F_{l}(\mathfrak{D}_{X})), \, I_2(F_{l}(\mathfrak{D}_{X}))\} \lesssim \|F\|_{L^{2, -1/2}_{s, \infty}} \quad \text{and} \quad \|F_{l}(\mathfrak{D}_X)\|_{L^2(d\mu_X) \to L^2(d\mu_X)} \lesssim \|F\|_{L^{2, -1/2}_{s, \infty}} .
\end{align*}
Whenever $F \in L^{2, -1/2}_{s, \infty}$ and $s>1/2$, from \cite[eq. (3.3)]{Martini_Ottazzi_Vallarino_spectral_multipliers_drift_2019} yield
\begin{align*}
    \|F_{l}(\mathfrak{D}_X)\|_{L^2(d\mu_X) \to L^2(d\mu_X)} \lesssim \|F_l\|_{L^{\infty}} \lesssim \|F\|_{L^{2, -1/2}_{s, \infty}} .
\end{align*}
On the other hand, note that for $s>1/2$ from \cite[eq. (4.6) and (3.3)]{Martini_Ottazzi_Vallarino_spectral_multipliers_drift_2019}, we have
\begin{align*}
    \|F_l\|_{L^2_{s, \sloc}} &\lesssim \|F_l\|_{L^{2, -1/2}_{s, \infty}} \lesssim \|F\|_{L^{2, -1/2}_{s, \infty}} .
\end{align*}
Therefore in view of the above relation, it remains to show that
\begin{align}
\label{Required to prove in terms of I1 nad I2}
    \max\{I_1(F_{l}(\mathfrak{D}_{X})), \, I_2(F_{l}(\mathfrak{D}_{X}))\} \lesssim \, \|F_l\|_{L^2_{s, \sloc}} .
\end{align}

Recall that from \cite[p. 1507]{Martini_Ottazzi_Vallarino_spectral_multipliers_drift_2019} we have $\chi^{1/2} F_{l}(\mathfrak{D}_X)(\chi^{-1/2}f) = F_{l}(\sqrt{\mathcal{L}})f$. 
Using this, if we let $K_{F_{l}(\mathfrak{D}_X)}$ be the integral kernel of the operator $F_{l}(\mathfrak{D}_X)$ with respect to the measure $\mu_{X}$ and $\mathcal{K}_{F_{l}(\sqrt{\mathcal{L}})}$ be the convolution kernel of $F_{l}(\sqrt{\mathcal{L}})$ with respect to the Haar measure, then we have
\begin{align}
\label{Relation between two kernels in terms of chi}
    K_{F_{l}(\mathfrak{D}_X)}((x,u), (y,s)) &= \chi^{-1/2}((x,u) (y,s)) \, \mathcal{K}_{F_{l}(\sqrt{\mathcal{L}})}((y,s)^{-1}(x,u)).
\end{align}

As $F_{l} = \eta \ast F$, this implies $\widehat{F}_{l} = \widehat{\eta} \, \widehat{F}$ and hence $\supp{\widehat{F}_{l}} \subset [-2, 2]$. Since, the sub-Laplacian $\mathcal{L}$ posses finite speed propagation (see \cite[Lemma 4.4]{Niedorf_Metivier_Group_2025}), the kernel $\mathcal{K}_{F_{l}(\sqrt{\mathcal{L}})}$ is supported in $B(0,2)$ (see \cite[eq. (2.7)]{Martini_Ottazzi_Vallarino_spectral_multipliers_drift_2019}), which implies that $K_{F_{l}(\mathfrak{D}_X)}$ is supported in $\{((x,u), (y,s)): \, \varrho ((x,u), (y,s)) \leq 2 \}$. By using this, we obtain
\begin{align}
\label{Estimate of I2 part}
     I_{2}(F_{l}(\mathfrak{D}_X)) &= \sup_{(y, s) \in G} \int_{B((y, s),2)^{c}} |K_{F_{l}(\mathfrak{D}_X)}((x,u), (y,s))| \, d\mu_{X}(x,u) = 0 .
\end{align}

Now, we proceed to estimate $I_{1}(F_{l}(\mathfrak{D}_X))$. Let $B := B(c_{B}, r_{B})$ be such that $r_{B} \leq 1$. Using self-adjointness of $F_{l}(\mathfrak{D}_{X})$ along with  \eqref{Relation between two kernels in terms of chi} and the fact that $\chi$ is a character, we estimate
\begin{align}
\label{Estimate of I1 part first}
    & \sup_{(y, s), (z, r) \in B} \int_{(2B)^{c}} |K_{F_{l}(\mathfrak{D}_X)}((y, s), (x,u)) - K_{F_{l}(\mathfrak{D}_X)}((z, r), (x,u))| \, d\mu_{X}(x,u) \\
    &\nonumber\leq 2 \, \sup_{(y, s) \in B} \int_{(2B)^{c}} | \chi^{-1/2}((y,s)(x,u)) \, \mathcal{K}_{F_{l}(\sqrt{\mathcal{L}})}((x,u)^{-1}(y,s)) \\
    &\nonumber \hspace{50mm} - \chi^{-1/2}(c_{B}(x,u)) \, \mathcal{K}_{F_{l}(\sqrt{\mathcal{L}})}((x,u)^{-1} c_{B})| \, d\mu_{X}(x,u) \\
    &\nonumber = 2 \, \sup_{(y, s) \in B} \int_{(2B(0, r_{B}))^{c}} | \chi^{-1/2}((y,s) c_{B} (x,u)^{-1}) \, \mathcal{K}_{F_{l}(\sqrt{\mathcal{L}})}((x,u) c_{B}^{-1}(y,s)) \\
    &\nonumber \hspace{40mm} - \chi^{-1/2}(c_{B} c_{B} (x,u)^{-1}) \, \mathcal{K}_{F_{l}(\sqrt{\mathcal{L}})}(x,u)| \, \chi(c_{B} (x,u)^{-1}) \, d(x,u) \\
    &\nonumber = 2 \, \sup_{(y, s) \in B} \int_{(2B(0, r_{B}))^{c}} | \chi^{-1/2}( (x,u) c_{B}^{-1} (y,s)) \, \mathcal{K}_{F_{l}(\sqrt{\mathcal{L}})}( (x,u) c_{B}^{-1} (y,s)) \\
    &\nonumber \hspace{65mm} - \chi^{-1/2}(x,u) \, \mathcal{K}_{F_{l}(\sqrt{\mathcal{L}})}(x,u)| \, d(x,u) \\
    &\nonumber = 2 \, \sup_{(y, s) \in B(0, r_{B})} \int_{(2B(0, r_{B}))^{c}} | \chi^{-1/2}( (x,u) (y,s)) \, \mathcal{K}_{F_{l}(\sqrt{\mathcal{L}})}( (x,u) (y,s)) \\
    &\nonumber \hspace{65mm} - \chi^{-1/2}(x,u) \, \mathcal{K}_{F_{l}(\sqrt{\mathcal{L}})}(x,u)| \, d(x,u) .
\end{align}
Let $\widetilde{X}= (X_1, \ldots, X_{d_1})$. Then, for $(y, s) \in B(0, r_{B})$, by using triangle inequality, mean value theorem and the support of $\mathcal{K}_{F_{l}(\sqrt{\mathcal{L}})}$, we get 
\begin{align}
\label{Estimate of I1 part second}
    &\int_{(2B(0, r_{B}))^{c}} | \chi^{-1/2}( (x,u) (y,s)) \, \mathcal{K}_{F_{l}(\sqrt{\mathcal{L}})}( (x,u) (y,s)) - \chi^{-1/2}(x,u) \, \mathcal{K}_{F_{l}(\sqrt{\mathcal{L}})}(x,u)| \, d(x,u) \\
    &\nonumber \leq  \sup_{B(0, 3)} \chi^{-1/2}  \int_{(2B(0, r_{B}))^{c}} | \mathcal{K}_{F_{l}(\sqrt{\mathcal{L}})}((x,u) (y,s)) - \mathcal{K}_{F_{l}(\sqrt{\mathcal{L}})}(x,u)| \, d(x,u) \\
    &\nonumber \qquad \quad  + \sup_{B(0, 2)} \chi^{-1/2} \int_{(2B(0, r_{B}))^{c}} |\chi^{-1/2}(y, s) -1 | |\mathcal{K}_{F_{l}(\sqrt{\mathcal{L}})}(x,u)| \, d(x,u) \\
    &\nonumber \leq C  \sup_{B(0, 3)} \chi^{-1/2}  \int_{(2B(0, r_{B}))^{c}} | \mathcal{K}_{F_{l}(\sqrt{\mathcal{L}})}((x,u) (y,s)) - \mathcal{K}_{F_{l}(\sqrt{\mathcal{L}})}(x,u)| d(x,u) \\
    &\nonumber \qquad \quad + \sup_{B(0, 2)} \chi^{-1/2} \, \sup_{B(0,1)} |\widetilde{X} \chi^{-1/2}| \, r_B \int_{(2B(0, r_{B}))^{c}} |\mathcal{K}_{F_{l}(\sqrt{\mathcal{L}})}(x,u)| \, d(x,u) .
\end{align}
Therefore, combining \eqref{Estimate of I1 part first}, \eqref{Estimate of I1 part second}, using Claims \ref{Assumption 1} and \ref{Assumption 2} and definition of $I_{1}(F_{l}(\mathfrak{D}_{X}))$ we obtain
\begin{align*}
I_{1}(F_{l}(\mathfrak{D}_{X})) &\lesssim \|F_l\|_{L^2_{s, \sloc}} .
\end{align*}
The above estimate along with \eqref{Estimate of I2 part} completes the proof of \eqref{Required to prove in terms of I1 nad I2}. Consequently, we also get \eqref{For Fl H1 L1 norm required to prove} and then combining with the fact \eqref{Non holomorphic to holomorphioc} we obtain 
\begin{align}
\label{Final statement for the operator Fl}
    \|F_{l}(\mathfrak{D}_X)\|_{\mathfrak{h}^1(d\mu_X) \to L^1(d\mu_X)} &\lesssim \, \|F\|_{\mathscr{L}_{s,\infty}^{2, -1/2 , b_X}} .
\end{align}

\subsection{Estimate of \texorpdfstring{$F_{g}$:}{}}
Now, we shall work with $F_{g}$.
Decompose $F_{g} = \sum_{k \geq 3} T_{k}$, where $\widehat{T}_{k} = \phi_{k}  \widehat{F}$. Recall that, we have $s>(d+1)/2$. By using Claim \ref{Assumption 3} and \cite[Lemma 2.12]{Martini_Ottazzi_Vallarino_spectral_multipliers_drift_2019}, we get
\begin{align*}
    \|\chi^{1/2} \mathcal{K}_{T_{k}(\sqrt{\mathcal{L}})}\|_{L^{1}(d(x,u))} & \leq C\, e^{b_X k} \, k^{Q/2-\beta-1/2} \|(1+|\cdot|)^{Q/2-\beta-1/2}T_{k}\|_{L^{2}(\R)} \\
    & \leq C \, k^{Q/2-\beta-1/2-s} \|F\|_{\mathscr{L}_{s,2}^{2, Q/2-\beta-1/2-s , b_X}} .
\end{align*}
Thus using \eqref{Relation between two kernels in terms of chi}, above estimate and since $s > (d+1)/2$, we have 
\begin{align}
\label{L1 kernel estimate of KFg operator}
    \|K_{F_{g}(\mathfrak{D}_{X})}(\cdot, (y,s)) \|_{L^1(d\mu_{X}(x,u))} & = \|\chi^{1/2} \, \mathcal{K}_{F_{g}(\sqrt{\mathcal{L}})} \|_{L^1(d(x,u))} \leq \sum_{k \geq 3} \|\chi^{1/2} \, \mathcal{K}_{T_{k}(\sqrt{\mathcal{L}})} \|_{L^1(d(x,u))} \\
    &\nonumber \leq C \sum_{k \geq 3} k^{Q/2-\beta-1/2-s} \|F\|_{\mathscr{L}_{s,2}^{2, Q/2-\beta-1/2-s , b_X}} \\
    &\nonumber \leq C \|F\|_{\mathscr{L}_{s,2}^{2, Q/2-\beta-1/2-s , b_X}}.
\end{align}
On the other hand, since $0\leq \beta<d_2/2$ and $s>(d+1)/2$, we can choose $\beta$ such that $s>Q/2-\beta$, hence from \cite[p. 1520]{Martini_Ottazzi_Vallarino_spectral_multipliers_drift_2019} we obtain
\begin{align}
\label{Relation of complexified Norm}
    \|F\|_{\mathscr{L}_{s,2}^{2, Q/2-\beta-1/2-s , b_X}} \lesssim \|F\|_{\mathscr{L}_{s,\infty}^{2, -1/2 , b_X}} .
\end{align}
Therefore, combining \eqref{L1 kernel estimate of KFg operator} and \eqref{Relation of complexified Norm}, whenever $F \in \mathscr{L}_{s,\infty}^{2, -1/2 , b_X}$ we get $F_{g}(\mathfrak{D}_{X})$ is bounded on $L^1(d\mu_{X})$, which implies boundedness from $\mathfrak{h}^1(d\mu_X)$ to $L^1(d\mu_X)$, moreover
\begin{align}
\label{For Fg h1 L1 norm final estimate}
    \|F_{g}(\mathfrak{D}_X)\|_{\mathfrak{h}^1(d\mu_X) \to L^1(d\mu_X)} &\lesssim \|F\|_{\mathscr{L}_{s,\infty}^{2, -1/2 , b_X}} .
\end{align}

Since, $F = F_{l} + F_{g}$, using boundedness of $F_{l}(\mathfrak{D}_{X})$ and $F_{g}(\mathfrak{D}_{X})$, that is from \eqref{Final statement for the operator Fl} and \eqref{For Fg h1 L1 norm final estimate} we obtain the required conclusion \eqref{Condition to prove for p=1 case}.


\section{Proof of the Claims}
\label{Section: Proof of claims}
In this section, we prove the three claims: Claim \ref{Assumption 1}, Claim \ref{Assumption 2} and Claim \ref{Assumption 3} stated in Section \ref{Section: Proof of main theorem}. Let us start with proof of Claim \ref{Assumption 1}.

\subsection{Proof of Claim \ref{Assumption 1}}

Consider a function $\phi \in C_c^\infty(\tfrac{1}{2}, 2)$ such that $\sum_{j \in \Z} \phi(2^{-j}\lambda) = 1$ for all $\lambda > 0$. Set $F_j(\lambda) = F(\lambda) \phi(2^{-j} \lambda)$. Thus, we have $F(\lambda) = \sum_{j \in \Z} F_j(\lambda).$

Take any $(y,s) \in B(0,1)$ and choose $J$ to be the smallest integer such that $2^{J}\|(y,s)\| \geq 1$. Therefore we get
\begin{align*}
    & \int_{\|(x,u)\| \geq 2 \|(y,s)\|} \, |\mathcal{K}_{F(\sqrt{\mathcal{L}})}((x,u)(y,s)) - \mathcal{K}_{F(\sqrt{\mathcal{L}})}(x,u)| \, d(x,u) \\
    &\leq \sum_{j>J} \, \int_{\|(x,u)\|>2\|(y,s)\|} |\mathcal{K}_{F_j(\sqrt{\mathcal{L}})}((x,u)(y,s)) - \mathcal{K}_{F_j(\sqrt{\mathcal{L}})}(x,u)| \, d(x,u) \\
    & \hspace{2cm} + \sum_{j\leq J} \, \int_{\|(x,u)\|>2\|(y,s)\|} |\mathcal{K}_{F_j(\sqrt{\mathcal{L}})}((x,u)(y,s)) - \mathcal{K}_{F_j(\sqrt{\mathcal{L}})}(x,u)| \, d(x,u) \\
    &=: E_1 + E_2 .
\end{align*}
\noindent \textbf{Estimate of $E_1$:}
Note that if $\|(x,u)\|>2\|(y,s)\|$, then we have
\begin{align*}
    \|(x,u)(y,s)\| \geq \|(x,u)\|-\|(y,s)\| > \|(y,s)\| .
\end{align*}
Now since $\supp F_j \subseteq [2^{j-1}, 2^{j+1}]$, using Proposition \ref{prop:L^1_estimate_of_kernel with distance} for $s>d/2$ and the fact $2^{J}\|(y,s)\| \geq 1$, we obtain
\begin{align*}
   E_1 & \lesssim \sum_{j>J} \, \int_{\|(x,u)\|>\|(y,s)\|}  |\mathcal{K}_{F_j(\sqrt{\mathcal{L}})}(x,u)| \, d(x,u) \\
   & \lesssim \sum_{j>J} \, (1+2^{j} \|(y,s)\|)^{-\epsilon} \,\int_{\|(x,u)\|>\|(y,s)\|}  (1+ 2^{j}\|(x,u)\|)^{\epsilon} |\mathcal{K}_{F_j(\sqrt{\mathcal{L}})}(x,u)| \, d(x,u) \\
   & \lesssim \sum_{j>J} \, (1+2^{j} \|(y,s)\|)^{-\epsilon} \|F_j(2^j \cdot)\|_{L^2_{s}} \\ 
   & \lesssim (2^{J}\|(y,s)\|)^{-\epsilon} \, \|F\|_{L^2_{s,\sloc}} \lesssim \|F\|_{L^2_{s,\sloc}} .
\end{align*}

\noindent \textbf{Estimate of $E_2$:}
Let us set $G_j(\lambda) = F_j(\lambda) \, e^{2^{-2j} \lambda^2}$. Then we have
\begin{align*}
    \mathcal{K}_{F_j(\sqrt{\mathcal{L}})}(x,u) &= \mathcal{K}_{G_j(\sqrt{\mathcal{L}})} \ast \mathcal{K}_{e^{-2^{-2j}\mathcal{L}}}(x,u) .
\end{align*}
Further, we set $R_{(y,s)} f(x,u) := f((x,u)(y,s))$. Hence an application of Young's convolution inequality yields
\begin{align}
\label{Euivalently computiion kernel difference}
& \int_{\|(x,u)\|>2\|(y,s)\|} |\mathcal{K}_{F_j(\sqrt{\mathcal{L}})}((x,u)(y,s)) - \mathcal{K}_{F_j(\sqrt{\mathcal{L}})}(x,u)| \, d(x,u) \\
&\nonumber \leq \int_{G} \, |\mathcal{K}_{G_j(\sqrt{\mathcal{L}})} \ast \mathcal{K}_{e^{-2^{-2j}\mathcal{L}}}((x,u)(y,s)) - \mathcal{K}_{G_j(\sqrt{\mathcal{L}})} \ast \mathcal{K}_{e^{-2^{-2j}\mathcal{L}}}(x,u)| \, d(x,u) \\ 
&\nonumber  \leq \int_{G} \, \int_{G} |\mathcal{K}_{G_j(\sqrt{\mathcal{L}})}(z,s)|\,  | (R_{(y,s)} \mathcal{K}_{e^{-2^{-2j}\mathcal{L}}}-  \mathcal{K}_{e^{-2^{-2j}\mathcal{L}}})((z,s)^{-1} (x,u))| \, d(z,s) \, d(x,u) \\ 
&\nonumber  = \int_{G} \left(|\mathcal{K}_{G_j(\sqrt{\mathcal{L}})}| * | (R_{(y,s)} \mathcal{K}_{e^{-2^{-2j}\mathcal{L}}}-  \mathcal{K}_{e^{-2^{-2j}\mathcal{L}}})| \right)(x,u) \, d(x,u) \\
&\nonumber \leq \|\mathcal{K}_{G_j(\sqrt{\mathcal{L}})}\|_{L^1} \|R_{(y,s)} \mathcal{K}_{e^{-2^{-2j}\mathcal{L}}}-  \mathcal{K}_{e^{-2^{-2j}\mathcal{L}}}\|_{L^1} .
\end{align}
Now from \cite[Lemma 5.4, Proposition 4.2]{Martini_Ottazzi_Vallarino_Solvable_Extension_2018} we get
\begin{align}
\label{Mean value for difference of heat kernel}
\|R_{(y,s)} \mathcal{K}_{e^{-2^{-2j}\mathcal{L}}}-  \mathcal{K}_{e^{-2^{-2j}\mathcal{L}}}\|_{L^1} \leq \|(y,s)\| \|\widetilde{X} \mathcal{K}_{e^{-2^{-2j}\mathcal{L}}}\|_{L^1} \leq C \|(y,s)\| \, 2^{j} ,
\end{align}
where $\widetilde{X}= (X_1, \ldots, X_{d_1})$.

Since $\supp G_j \subseteq [2^{j-1}, 2^{j+1}]$, applying Proposition \ref{prop:L^1_estimate_of_kernel with distance} with $\varepsilon=0$ for $s>d/2$, we have
\begin{align}
\label{L1 kernel estimate of Gj}
\|\mathcal{K}_{G_j(\sqrt{\mathcal{L}})}\|_{L^1} &\leq C \|G_j(2^j \cdot) \|_{L^2_{s}} \leq C \|F_j(2^j \cdot) \|_{L^2_{s}} .
\end{align}
Therefore plugging the above two estimates \eqref{Mean value for difference of heat kernel}, \eqref{L1 kernel estimate of Gj} into \eqref{Euivalently computiion kernel difference} we obtain
\begin{align*}
E_2 &= \sum_{j\leq J} \, \int_{\|(x,u)\|>2\|(y,s)\|} |\mathcal{K}_{F_j(\sqrt{\mathcal{L}})}((x,u)(y,s)) - \mathcal{K}_{F_j(\sqrt{\mathcal{L}})}(x,u)| \, d(x,u) \\
&\lesssim \sum_{j\leq J} 2^{j} \, \|(y,s)\| \, \|F_j(2^j \cdot) \|_{L^2_{s}} \\
& \lesssim 2^{J} \|(y,s)\| \, \|F\|_{L^2_{s,\sloc}}  \lesssim \|F \|_{L^2_{s,\sloc}} .
\end{align*}  
Now combining the estimates of $E_1$ and $E_2$, we got the required Claim \ref{Assumption 1}.


\subsection{Proof of Claim \ref{Assumption 2}} 

Note that since $\supp \widehat{F} \subseteq [-2,2]$, due to the finite speed propagation we have $\supp \mathcal{K}_{F(\sqrt{\mathcal{L}})} \subseteq B(0, 2)$. Then decomposing the set $\{(x,u): 2 \geq \|(x,u)\|>r\}$ into the dyadic annuli, in order to prove Claim \ref{Assumption 2}, it is enough to show, for $s>d/2$ and all even functions $F:\mathbb{R} \to \C$ with $\supp{\widehat{F}} \subseteq [-2,2]$, we have
\begin{align}
\label{Reduced claim 2}
    \sup_{0<r \leq 1} \int_{r \leq \|(x,u)\| < 2r} |\mathcal{K}_{F(\sqrt{\mathcal{L}})}(x,u)| \, d(x, u) &\leq C \|F \|_{L^2_{s,\sloc}} .
\end{align}

Let $\phi$ be as in Claim \ref{Assumption 1}. For $0<r \leq 1$, suppose $J \in \mathbb{Z}$ be the smallest integer such that $2^{J} r >1$. Set $F_J(\lambda) = \sum_{j \leq J} F_j(\lambda)$. Then we write
\begin{align*}
    & \int_{r \leq \|(x,u)\| < 2r} |\mathcal{K}_{F(\sqrt{\mathcal{L}})}(x,u)| \, d(x, u) \\
    &\leq \int_{G} |\mathcal{K}_{F_J(\sqrt{\mathcal{L}})}(x,u)| \, d(x,u) + \sum_{j > J} \int_{r \leq \|(x,u)\| < 2r} |\mathcal{K}_{F_j(\sqrt{\mathcal{L}})}(x,u)| \, d(x,u) =: I_1 + I_2 .
\end{align*}

\noindent \textbf{Estimate of $I_1$:}
Note that $\supp F_J \subset [0, 2^{J+2}]$. Using proposition \ref{prop:L^1_estimate_of_kernel with distance} with $\varepsilon=0$ and $R = 2^{J+2}$, we get that for $s > d/2$,
$$
I_1 = \int_{G} |\mathcal{K}_{F_J(\sqrt{\mathcal{L}})}(x, u)| \, d(x,u) \leq C \|F_J(2^{J+2}\cdot)\|_{L^2_{s}} \leq C \|F\|_{L^2_{s, \sloc}} .
$$

\noindent \textbf{Estimate of $I_2$:}
In this case note $\supp F_j \subset [2^{j-1}, 2^{j+1}]$. Then from Proposition \ref{prop:L^1_estimate_of_kernel with distance} for $s>d/2$ we have
\begin{align*}
I_2  &\leq \sum_{j > J} (1+2^{j+1}r)^{-\epsilon} \int_{r \leq \|(x,u)\| < 2r} (1+2^{j+1}\|(x,u)\|)^{\epsilon} |\mathcal{K}_{F_j(\sqrt{\mathcal{L}})}(x,u)| \, d(x,u)  \\
& \lesssim \sum_{j > J} (1+2^{j+1}r)^{-\epsilon} \, \|F_j(2^{j+1}\cdot)\|_{L^2_{s}} \\
&\lesssim (2^{J}r)^{-\epsilon} \|F\|_{L^2_{s, \sloc}} \lesssim \|F\|_{L^2_{s, \sloc}} .
\end{align*}
Hence combining the estimates of $I_1$ and $I_2$, we get the required estimate \eqref{Reduced claim 2}.

\subsection{Proof of Claim \ref{Assumption 3}}

First note that, since $\supp{\widehat{F}} \subseteq [-r,r]$ due to the finite speed propagation of $\mathcal{L}$ and H\"older's inequality, we get
\begin{align}
\label{Use of Holder in claim 3}
    \|\chi^{1/2} \mathcal{K}_{F(\sqrt{\mathcal{L}})}\|_{L^1} &\leq \Big(\int_{B(0,r)} \frac{\chi(x,u)}{|x|^{2\beta}}\, dx du \Big)^{1/2} \| |x|^{\beta} \mathcal{K}_{F(\sqrt{\mathcal{L}})}\|_{L^2} .
\end{align}
Applying Proposition \ref{Proposition: First layer weighted Plancherel} with $0\leq \beta<d_2/2$, we have
\begin{align}
\label{Estimate of weighted Plancherel in claim 3}
    \| |x|^{\beta} \mathcal{K}_{F(\sqrt{\mathcal{L}})}\|_{L^2} &\leq C \left(\int_{0}^{\infty} |F(\lambda)|^2 \lambda^{Q-2\beta} \frac{d\lambda}{\lambda} \right)^{1/2} .
\end{align}
Note that $b_X>0$. From Lemma \ref{Lemma: Pointwise estimate of character} and for $r\geq 1$ we can see that
\begin{align}
\label{Estimate of integral with chi}
    \int_{B(0,r)} \frac{\chi(x,u)}{|x|^{2\beta}}\, dx du &= \int_{B(0,1)} \frac{\chi(x,u)}{|x|^{2\beta}}\, dx \, du + \int_{B(0,r)\setminus B(0,1)} \frac{\chi(x,u)}{|x|^{2\beta}}\, dx \, du \\
    &\nonumber \lesssim e^{2b_X} \int_{B(0,1)} \frac{dx\, du}{|x|^{2\beta}} + \sum_{j=1}^{\lfloor r \rfloor} e^{2b_X (j+1)} \int_{B(0,j+1)\setminus B(0,j)} \frac{dx\, du}{|x|^{2\beta}} .
\end{align}
Let us compute the integrals in the last inequality. For $j \geq 1$, set $A_j:= \{(x,u): j< |(x,u)| \leq j+1 \}$. Also let $A_{j}^{in}:= \{(x,u) \in A_j : |x|\leq j/2\}$ and $A_{j}^{out} := \{(x,u) \in A_j : |x|> j/2\}$. Then
\begin{align}
\label{Breaking the annulus into in and out}
    \int_{B(0,j+1)\setminus B(0,j)} \frac{dx\, du}{|x|^{2\beta}} &= \int_{A_{j}^{in}} \frac{dx\, du}{|x|^{2\beta}} + \int_{A_{j}^{out}} \frac{dx\, du}{|x|^{2\beta}} .
\end{align}
For the second integral in the right hand side of the above equality \eqref{Breaking the annulus into in and out}, note that
\begin{align}
\label{Integral of weight in in part}
    \int_{A_{j}^{out}} \frac{dx du}{|x|^{2\beta}} &\lesssim j^{-2\beta} |A_j| \lesssim j^{-2\beta} j^{Q-1} \lesssim j^{Q-2\beta-1} .
\end{align}
While for the first integral in the right hand side of \eqref{Breaking the annulus into in and out} we estimate as follows: if $|x| \leq j/2$ and $(x,u) \in A_j$ then $j^2/4 \leq |u| \leq (j+1)^2$. Consequently, for $0\leq \beta < d_1/2$,
\begin{align}
\label{Integral of weight in out part}
    \int_{A_{j}^{in}} \frac{dx\, du}{|x|^{2\beta}} &\lesssim \int_{|x| \leq j/2} \frac{dx}{|x|^{2\beta}} \int_{j^2/4 \leq |u| \leq (j+1)^2} du \lesssim j^{d_1-2\beta} j^{2d_2-1} \lesssim j^{Q-2\beta-1} .
\end{align}
Hence combining \eqref{Integral of weight in in part} and \eqref{Integral of weight in out part} for $0\leq \beta < d_1/2$ we have
\begin{align}
\label{Integral of weight inside annulus}
    \int_{B(0,j+1)\setminus B(0,j)} \frac{dx\, du}{|x|^{2\beta}} &\lesssim j^{Q-2\beta-1} .
\end{align}
On the other hand, for $0\leq \beta < d_1/2$,
\begin{align}
\label{Integral of weight inside ball of 1}
    \int_{B(0,1)} \frac{\chi(x,u)}{|x|^{2\beta}}\, dx \, du &\lesssim \int_{|x| \leq 1} \frac{dx}{|x|^{2 \beta}} \int_{|u|\leq 1} du \lesssim C .
\end{align}
Therefore plugging the above estimates \eqref{Integral of weight inside annulus} and \eqref{Integral of weight inside ball of 1} into \eqref{Estimate of integral with chi} for $0\leq \beta < d_1/2$ yields
\begin{align}
\label{Final integral of weight in claim 3}
    \int_{B(0,r)} \frac{\chi(x,u)}{|x|^{2\beta}}\, dx du  &\lesssim \sum_{j=0}^{\lfloor r \rfloor} e^{2b_X (j+1)} (j+1)^{Q-2\beta-1} \\
    &\nonumber \lesssim r^{Q-2\beta-1} \sum_{j=1}^{\lceil r \rceil} e^{2b_X j} \lesssim r^{Q-2\beta-1} e^{2b_X r} .
\end{align}
Recall that, on M\'etivier group we have $d_1>d_2$ (see \cite[Section 2]{Molla_Singh_Commutator_Metivier_Arxiv}). Finally, putting the estimates \eqref{Estimate of weighted Plancherel in claim 3} and \eqref{Final integral of weight in claim 3} into \eqref{Use of Holder in claim 3} for $0\leq \beta<d_2/2$, we obtain
\begin{align*}
    \|\chi^{1/2} \mathcal{K}_{F(\sqrt{\mathcal{L}})}\|_{L^1} &\leq C\, (1+r)^{Q/2-\beta-1/2} e^{b_X r} \left(\int_{0}^{\infty} |F(\lambda)|^2 \lambda^{Q-2\beta} \frac{d\lambda}{\lambda} \right)^{1/2} .
\end{align*}
Therefore, the proof of Claim \ref{Assumption 3} is completed.

\section*{Acknowledgments}
The first author is supported by the Senior Research Fellowship from the Indian Institute of Science Education and Research Bhopal. The second author was partially supported by the Prime Minister's Research Fellowship (PMRF) at the Indian Institute of Science Education and Research Kolkata as well as by an Institute Postdoctoral Fellowship at the Indian Institute of Science Education and Research Mohali. We would like to thank Rahul Garg for his careful reading of the manuscript and numerous useful suggestions.


\providecommand{\bysame}{\leavevmode\hbox to3em{\hrulefill}\thinspace}
\providecommand{\MR}{\relax\ifhmode\unskip\space\fi MR }
\providecommand{\MRhref}[2]{%
  \href{http://www.ams.org/mathscinet-getitem?mr=#1}{#2}
}
\providecommand{\href}[2]{#2}

\end{document}